\documentclass [a4paper,12pt]{amsart}
\usepackage{amsthm}
\usepackage{amsmath}
\usepackage{amssymb}
\usepackage[ansinew]{inputenc}
\usepackage{amscd}
\usepackage[ansinew]{inputenc}
\usepackage[mathscr]{euscript}
\usepackage{color}
\usepackage{enumerate}
\usepackage{enumitem}

\usepackage{hyperref}

\oddsidemargin 0cm \evensidemargin 0cm \textwidth 16.5cm
\textheight 22.5cm \topmargin -0.3cm \setlength{\headsep}{1 cm}

\newtheorem{thm}{Theorem}[section]
\newtheorem{cor}[thm]{Corollary}

\newtheorem{prop}[thm]{Proposition}

\theoremstyle{definition}
\newtheorem{ex}[thm]{Example}

\theoremstyle{definition}

\theoremstyle{definition}
\newtheorem{rem}[thm]{Remark}

\theoremstyle{definition}



\def\C{\mathbb C}
\def\R{\mathbb R}

\def\Z{\mathbb Z}

\def\supp {\mathrm{supp}}

\def\ord{\operatorname{ord}}
\def\O{\mathcal O}

\def\m{\mathbf m}

\def\LL{\mathcal L}
\def\leq{\leqslant}
\def\geq{\geqslant}

\def\({\left(}
\def\){\right)}

\def\to{\longrightarrow}
\def\DP{\operatorname{DP}}
\def\icis{\textsc{icis}}
\def\Li{B}
\def\lct{\operatorname{lct}}
\def\*{\color{red}\blacksquare}

\newcommand{\tL}{\textnormal{\texttt{L}}}

\subjclass[$2000$ Mathematics Subject Classification]{Primary
32S05; Secondary 32S30}

\begin{document}

\title[The log canonical threshold of products of ideals]
{The log canonical threshold of products of ideals \\ \vskip4pt and mixed \L ojasiewicz exponents}

\author{Carles Bivià-Ausina}

\address{Institut Universitari de Matemàtica Pura i Aplicada, Universitat Politècnica de València,
Camí de Vera s/n, 46022 València, Spain}

\email{carbivia@mat.upv.es}

\thanks{Work partially supported by Grant PID2021-124577NB-I00 funded by
Ministerio de Ciencia, Innovación y Universidades MCIN/AEI/10.13039/501100011033 and by \lq ERDF A way of making Europe' (European Regional Development Fund)}

\keywords{\L ojasiewicz exponents, log canonical threshold, integral closure of ideals.}

\dedicatory{\rightline{To the memory of Professor Arkadiusz P\l oski}}

\begin{abstract}
Given two ideals $I$ and $J$ of the ring $\O_n$ of analytic function germs $f:(\C^n,0)\to \C$,
we show a sharp lower bound for the log canonical threshold of $IJ$ in terms of the sequences of mixed \L ojasiewicz exponents of them.
In particular, in the case where $J$ is the maximal ideal, the corresponding equality holds if and only if
the integral closure of $I$ equals some power of the maximal ideal.
\end{abstract}

\maketitle


\section{Introduction}\label{Intro}

Let $\O_n$ denote the ring of holomorphic function germs $f:(\C^n,0)\to \C$ and let us denote by $\m_n$ the maximal ideal of $\O_n$. Given a germ $f\in\O_n$ with isolated singularity at the origin, let $\mu^*(f)$ be the vector of mixed Milnor numbers
$(\mu^{(n)}(f),\dots, \mu^{(1)}(f), \mu^{(0)}(f))$, introduced by Teissier in \cite{Cargese}. We recall that $\mu^{(i)}(f)$ denotes the Milnor number of the restriction $f\vert_H$ of $f$ to a generic linear subspace $H$ in $\C^n$ of dimension $i$, for all $i=0,1,\dots, n$. Therefore $\mu^{(n)}(f)$ is the Milnor number of $f$, which is commonly denoted by $\mu(f)$. Moreover $\mu^{(1)}(f)=\ord(f)-1$ and $\mu^{(0)}(f)=1$, where $\ord(f)$ denotes the order of $f$, that is, the maximum of those $r\in\Z_{\geq 1}$ such that $f\in\m_n^r$.
The study of mixed Milnor numbers of function germs of $\O_n$, including the relations between them, motivated the introduction in \cite{Cargese} of the sequence of mixed multiplicities attached to any pair of ideals of $\O_n$ of finite colength. This idea was further explored by Rees \cite{Rees2}, thus leading to an essential research line in commutative algebra. In particular, Rees introduced the notion of mixed multiplicity $e(I_1,\dots, I_n)$
of any given collection $I_1,\dots, I_n$ of $\m$-primary ideals in a Noetherian local ring of dimension $n$ and
gave results about the computation of this number via the notion of joint reduction of ideals. Formulas relating the multiplicity of a product of ideals with the family of mixed multiplicities attached to them are also given in \cite{Rees2,Cargese} and have been of fundamental interest
in the study of the constancy of several invariants attached to a given analytic deformation $f_t:(\C^n,0)\to \C$ of fuction germs (we refer to
\cite{G92, G96} for subsequent generalizations in the context of isolated complete intersection singularities).

In addition, if we fix an analytic function germ $f:(\C^n,0)\to (\C,0)$ and any integer $i\in\{1,\dots, n\}$, Teissier studied in \cite[\S 5]{TeissierIM} and \cite{TeissierRIMS} the \L ojasiewicz exponent of the Jacobian ideal of the restriction $f\vert_H$ of $f$ to a generic linear subspace $H$ in $\C^n$ of dimension $i$. Similarly to \cite{TeissierRIMS}, let us denote by $\theta^{(i)}(f)$, for $i\in\{1,\dots, n\}$, the
family of numbers thus obtained. Teissier conjectured in \cite[p.\,7]{TeissierRIMS} that
$$
\sum_{i=1}^n\frac{1}{\theta^{(i)}(f)+1}\leq \sigma_0(f)
$$
where $\sigma_0(f)$ denotes the Arnold exponent of $f$ (this number is also called in \cite[p.\,273]{Kollar} the {\it complex singular index} of $f$). This conjecture has been proved by Dirks and Musta\c{t}\u a in \cite{DM} in a more general context. We also refer to \cite{EM,Loeser}
for previous results in this direction. Let us recall that, if $\lct(f)$ denotes the log canonical threshold of $f$, then
$\lct(f)=\min\{1,\sigma_0(f)\}$, as can be see in \cite[Theorem 9.5]{Kollar}. Kim proved in \cite{Kim} the inequality
\begin{equation}\label{Kiminicial}
\sum_{i=1}^n\frac{1}{\theta^{(i)}(f)+1}\leq \lct(\m_nJ(f)),
\end{equation}
where we denote by $\lct(I)$ the log canonical threshold of a given ideal $I$ of $\O_n$ (see \cite{Mu2, Kollar}).
Inequality (\ref{Kiminicial}) has been our main motivation in this paper.

If $I$ denotes an ideal of $\O_n$ of finite colength, Hickel introduced in \cite{Hickel2} the sequence of
\L ojasiewicz exponents of $I$ that appear by restricting a given generating system of $I$ to generic linear subspaces of different dimensions.
Additionally, we introduced in \cite{Bivia2009} the notion of mixed \L ojasiewicz exponent $\LL_0(I_1,\dots, I_n)$
of a collection of $d$ ideals $I_1,\dots, I_n$ in a given Noetherian local ring of dimension $n$ under the condition that $\sigma(I_1,\dots, I_n)<\infty$, where $\sigma(I_1,\dots, I_n)$ denotes what we call the {\it Rees' mixed multiplicity of $I_1,\dots, I_n$}
(see (\ref{lasigma})). Hence, if $I$ is an ideal of $\O_n$ of finite
colength and $i\in\{1,\dots, n\}$, we can set $\LL_0^{(i)}(I)=\LL_0(I,\dots, I, \m_n,\dots, \m_n)$, where $I$ is repeated $i$ times and $\m_n$
is repeated $n-i$ times. As can be seen in \cite[Lemma 3.9]{BF1}, the sequence of numbers $\LL_0^{(n)}(I)\geq \cdots \geq \LL_0^{(1)}(I)$ thus obtained coincides with the above-mentioned sequence of numbers defined by Hickel in \cite{Hickel2}.
In this article we extend (\ref{Kiminicial}) by replacing the Jacobian ideal by an arbitrary ideal $I$ of $\O_n$ of finite colength and characterize the corresponding equality. That is, in Theorem \ref{novafita2} we prove that
\begin{equation}\label{introddesKim2}
\sum_{i=1}^n\frac{1}{\LL_0^{(i)}(I)+1} \leq \lct\(\m_nI\).
\end{equation}
and equality holds if and only if there exists some $r\in\Z_{\geq 1}$ such that $\overline I=\m_n^r$.
This result will be supported by Theorem \ref{novafita}, which is the other main result of the article, where we give a chain of inequalities relating $\lct(IJ)$, for any pair of ideals $I$ and $J$ of $\O_n$ of finite colength, with the sequences of mixed \L ojasiewicz exponents $\LL_0^{(i)}(I)$ and $\LL_0^{(i)}(J)$, for $i=1,\dots, n$. At the end of the article we analyze how inequality (\ref{Kiminicial}) behaves when applied to the
Briançon-Speder example $f_t:(\C^3,0)\to (\C,0)$ (see \cite{BSexample}).

Let us remark that the kind of results that we have obtained takes part of the more general project of determining
the connections between \L ojasiewicz exponents and log canonical thresholds of ideals.
Let us fix a coordinate system $x_1,\dots, x_n$ in $\C^n$ and let $I$ be an ideal of $\O_n$. Let $\Gamma_+(I)$ denote the Newton polyhedron of $I$
with respect to the coordinates $x_1,\dots, x_n$ (see \cite{BFS} for instance). We say that $I$ is {\it monomial} when $I$
admits a generating system formed by monomials in $x_1,\dots, x_n$.
Let us recall that if $I$ is monomial, then the log canonical threshold of $I$ is expressed by means of the Newton polyhedron of $I$ (see (\ref{mainHowald})), by a result of Howald \cite{Howald}. As can be seen in \cite[Proposition 5.3]{BF2}, if $\overline I$ is monomial and $I$ has finite colength, then $\Gamma_+(I)$ also determines all rational powers of $I$ and therefore it gives any \L ojasiewicz exponent $\LL_J(I)$, where $J$ denotes
any other proper ideal of $\O_n$ (see Section \ref{preliminary} for the definition of $\LL_J(I)$ and other preliminary concepts and results). Both results allowed us to find that $\lct(I)=\LL_{x_1\cdots x_n}(I)$ when $\overline I$ is a
monomial ideal of $\O_n$ (see \cite[Theorem 5.4]{BF2}). In view of these facts and the existence of a deformation of $I$ into
its initial ideal (after fixing a multiplicative order of the coordinates $x_1,\dots, x_n$) by a flat family of ideals (see \cite[Corollary 7.5.2]{GP}), one could expect that, in general log canonical thresholds and \L ojasiewicz exponents
have the same nature, in other words, have very similar properties. However, semicontinuity in deformations is an essential feature that differentiates them.

Log canonical thresholds of ideals are lower semicontinuous in deformations of ideals, as can be seen in \cite{DK} or \cite[p.\,418]{Mu2}. Moreover, if $f_t:(\C^n,0)\to (\C,0)$ is a $\mu$-constant deformation of analytic function germs, the numbers $\lct(f_t)$ remain constant. As a counterpart, the problem of deciding the constancy of $\LL_0(J(f_t))$ in Milnor constant deformations is still an open problem. By the results of Teissier \cite{TeissierIM} (see also \cite{TeissierRIMS}) it follows that the \L ojasiewicz exponent of the ideal $J(f_t)$ is lower semicontinuous if the deformation $f_t$ is $\mu$-constant. This result was generalized by A. P\l oski in \cite{Ploski2010} to deformations of map germs $(\C^n,0)\to (\C^n,0)$ with constant multiplicity. We also refer to \cite{MN2005,RodakKodai,RRS2016} for further works related with the
semicontinuity of \L ojasiewicz exponents in deformations of ideals.

\section{Mixed \L ojasiewicz exponents}\label{preliminary}   

Given two proper ideals $I$ and $J$ of $\O_n$ with $V(I)\subseteq V(J)$, if $f:(\C^n,0)\to (\C^p,0)$ and $g:(\C^n,0)\to (\C^q,0)$ are
analytic maps whose component functions generate $I$ and $J$, respectively, then the {\it \L ojasiewicz exponent of $I$ with respect to $J$}, denoted by
$\LL_J(I)$, is defined as as the infimum of those $\alpha\in\R_{\geq 0}$ for which there exists a constant $C>0$ and an open
neighbourhood $U$ of $0$ in $\C^n$ such that
$$
\Vert g(x)\Vert^\alpha \leq C\Vert f(x) \Vert
$$
for all $x\in U$. We will denote $\LL_{\m_n}(I)$ simply by $\LL_0(I)$ and we call this number the {\it \L ojasiewicz exponent of $I$}.
The numbers $\LL_J(I)$ have several characterizations, as can be seen in \cite[Théorème 7.2]{LT1974}.
We refer to \cite{BE2009,BE2011,BE2013,Brz,Feehan1,GKP,KOP,Ploski1984,Ploski1985,PloskiBCP,RS2011,Sp2000,TeissierResonances} for additional information regarding geometrical, topological and algebraic aspects of the numbers thus defined.

Let us denote by $\overline I$ the integral closure of $I$. By \cite{LT1974}, we also have that
\begin{equation}\label{rsint}
\LL_J(I)=\min\left\{\frac rs: J^r\subseteq \overline{I^s},\,r,s\in\Z_{\geq 1}\right\}.
\end{equation}
Moreover $\LL_0(I)$ is equal to the number $\tau^*(I)$ defined by
D'Angelo in \cite[p.\ 621]{DAngelo}. That is
\begin{equation}\label{ordre}
\LL_0(I)=\sup_{\gamma\in\mathcal P}\bigg(\inf_{h\in
I}\frac{\ord(h\circ\gamma)}{\ord(\gamma)}\bigg),
\end{equation}
where $\mathcal P$ denotes the set of analytic maps $(\C,0)\to
(\C^n,0)$. The number on the right of (\ref{ordre}) is called in \cite{DAngelo, DAngelo2}
the {\it order of contact of $I$}.

Given a Noetherian local ring $(R,\m)$, if $I$ and $J$ is a pair of ideals of $R$ such that $\sqrt J\subseteq \sqrt I$,
the expression of $\LL_J(I)$ given in (\ref{rsint}) in terms of the integral closures leads to the definition of
{\it \L ojasiewicz exponent of $I$ with respect to $J$} also for pairs of proper ideals $I$ and $J$ in arbitrary Noetherian local rings such that $\sqrt J\subseteq \sqrt I$.
If additionally, we assume that $I$ is $\m$-primary and $R$ is quasi-unmixed (see \cite[p.\,401]{HS} or
\cite[p.\,251]{Matsumura}), then the Rees' multiplicity theorem (see \cite[p.\,222]{HS}) allows to write
$$
\LL_J(I)=\min\left\{\frac rs: e(I^s)=e(J^r+I^s),\,r,s\in\Z_{\geq 1}\right\}.
$$
where $e(I)$ denotes the Samuel multiplicity of $I$ (see \cite{HS, Matsumura,V}).


Let $(R,\m)$ be a Noetherian local ring of dimension $n$. If $I_1,\dots, I_n$ is a collection of $n$ $\m$-primary ideals of $R$, then
we denote by $e(I_1,\dots, I_n)$ the {\it mixed multiplicity of $I_1,\dots, I_n$} (see \cite{HS, Rees2}). Let us recall that when $I_1,\dots, I_n$ coincide with a given ideal $I$ of $R$, then $e(I_1,\dots, I_n)=e(I)$. Given an integer
$i\in\{0,1,\dots, n\}$, we define $e_i(I)$ as the mixed multiplicity $e(I,\dots, I,\m,\dots, \m)$, where $I$ is repeated $i$ times
and $\m$ is repeated $n-i$ times. Hence we have $e_n(I)=e(I)$, $e_1(I)=\ord(I)$ and $e_0(I)=e(\m)$, where $\ord(I)$ denotes the order of the ideal, that is, the maximum of those $r\in\Z_{\geq 1}$ such that $I\subseteq \m^r$.

More generally, in \cite{BiviaMRL} we introduced the number $\sigma(I_1,\dots, I_n)$, where now all the ideals are not assumed to
be $\m$-primary:
\begin{equation}\label{lasigma}
\sigma(I_1,\dots, I_n)=\max_{r\in\Z_{\geq 1}}\,e\big(I_1+\m^r,\dots,
I_n+\m^r\big).
\end{equation}

As a consequence of \cite[Proposition 2.9]{BiviaMRL}, if the residual field $R/\m$ is infinite,
the condition $\sigma(I_1,\dots, I_n)<\infty$ is equivalent to the existence of elements $g_i\in I_i$, for all $i=1,\dots, n$, such that
the ideal $\langle g_1,\dots, g_n\rangle$ is $\m$-primary. So, if $\sigma(I_1,\dots, I_n<\infty$, then $\sigma(I_1,\dots, I_n)$ is the lowest possible value of the multiplicity $e(g_1,\dots, g_n)$, where $(g_1,\dots, g_n)$ ranges in the family of $d$ tuples of elements of $I_1\oplus \cdots \oplus I_n$ generating an ideal of finite colength. We observed that the integer $\sigma(I_1,\dots,I_n)$, when finite, is equal to the multiplicity defined by Rees in \cite[p.\,181]{Reesllibre}, via the notion of general extension of a local ring (see \cite[p.\,145]{Reesllibre} and \cite{RS}), for certain sets of ideals not necessarily of finite colength. This motivates us to refer
to $\sigma(I_1,\dots, I_n)$ as the {\it Rees' mixed multiplicity of $I_1,\dots, I_n$}.

Let $I_1,\dots, I_n$ be ideals of $R$ for which $\sigma(I_1,\dots, I_n)<\infty$. We define the following number (see \cite[p.\,392]{Bivia2009}):
\begin{equation}\label{laerre}
r(I_1,\dots, I_n)=\min\big\{r\in\Z_{\geq 1}: \sigma(I_1,\dots,
I_n)=e\(I_1+\m^r,\dots, I_n+\m^r\)\big\}.
\end{equation}
Therefore, motivated by \cite[Corollary 3.4]{Bivia2009}, we defined the {\it \L ojasiewicz exponent of $I_1,\dots, I_n$}, denoted by
$\LL_0(I_1,\dots, I_n)$, as
$$
\LL_0(I_1,\dots, I_n)=\min_{s\geqslant 1}\frac{r(I_1^s,\dots, I_n^s)}{s}.
$$

If $I$ denotes an $\m$-primary ideal of $R$, then we set $\LL_0^{(i)}(I)=\LL_0(I,\dots, I, \m,\dots, \m)$, where $I$ is repeated $i$ times and $\m$ is repeated $n-i$ times. As shown in \cite[Proposition 3.9]{BF1}, if $R$ is a regular with infinite residue field $\mathbf k$ and we fix an integer $i\in\{1,\dots, n\}$, the number $\LL_0^{(i)}(I)$ is equal to the {\L}ojasiewicz exponent of the image of $I$ in the quotient ring $R/\langle h_1,\dots, h_{n-i}\rangle$, where $h_1,\dots, h_{n-i}$ are linear forms chosen generically in $\mathbf k[x_1,\dots, x_n]$.
So the numbers $\LL_0^{(i)}(I)$ coincide with the numbers $\nu^{(i)}_I$ defined by Hickel in \cite[p.\,635]{Hickel2}.
As a consequence of \cite[Proposition 3.5]{Bivia2020}, we have that
$$
\LL_0^{(i)}(I)=\inf\left\{\frac rs: r,s\in\Z_{\geq 1}, r\geq s,\,e_{i}(I^s+\m^r)=e_{i}(I^s)\right\}.
$$
for all $i=1,\dots, n$.

Given a subset $\tL\subseteq\{1,\dots,n\}$, $\tL\neq\emptyset$, we define
$\C^n_\tL=\{x\in\C^n: x_i=0,\,\textnormal{for all $i\notin \tL$}\}$. The natural inclusion $\varphi:\C^n_\tL\to \C^n$
induces a morphism $\varphi^*:\O_n\to\O_{n,\tL}$ by composition with $\varphi$, where $\O_{n,\texttt L}$ denote the subring of $\O_n$ formed by all function germs of $\O_n$ that depend only on the variables $x_i$ such that $i\in \texttt L$. Hence, given any ideal $I$ of $\O_n$,
we denote by $I^{\tL}$ the image $\varphi^*(I)$. Let us recall the following result about the computation of the sequence $\LL^*_0(I)$ when $I$ is a monomial ideal of $\O_n$.

\begin{thm}\label{Lojorder}\cite{BF1}
Let us fix a coordinate system $x_1,\dots, x_n$ in $\C^n$. Let $I$ be a monomial ideal of $\O_n$
with respect to this coordinate system and let us suppose that $I$ has finite colength. Then, for all $i\in\{1,\dots, n\}$, we have
\begin{equation}\label{eqLojorder}
\LL_0^{(i)}(I)=\max\big\{\ord(I^{\tL}): \tL\subseteq\{1,\dots, n\},\,\vert \tL\vert=n-i+1\big\}.
\end{equation}
\end{thm}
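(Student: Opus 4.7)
My plan is to reduce the mixed \L ojasiewicz exponent to the classical \L ojasiewicz exponent on a generic $i$-dimensional section, translate this via D'Angelo's curve formula, and verify the two inequalities separately. First I would invoke \cite[Proposition 3.9]{BF1}, recalled in the paragraph preceding the statement: it gives $\LL_0^{(i)}(I)=\LL_0(I\cdot \O_n/\langle h_1,\dots,h_{n-i}\rangle)$ for generic linear forms $h_1,\dots,h_{n-i}$. Writing $H=V(h_1,\dots,h_{n-i})\subset\C^n$ for the corresponding generic $i$-dimensional subspace, D'Angelo's formula (\ref{ordre}) applied to this quotient then rewrites the quantity as $\sup_\gamma \inf_{g\in I}\ord(g\circ \gamma)/\ord(\gamma)$, with $\gamma$ running over nontrivial analytic curve germs in $(H,0)$.

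For the inequality $\geq$, fix $\tL\subseteq\{1,\dots,n\}$ with $|\tL|=n-i+1$. Since $\dim H+\dim\C^n_{\tL}=i+(n-i+1)=n+1$, for generic $H$ the intersection $H\cap \C^n_{\tL}$ is a line through the origin; parametrize it by $\gamma(t)=(tv_1,\dots,tv_n)$ with $v_k\neq 0$ iff $k\in\tL$. For any $g\in I$, the composition $g\circ\gamma$ either vanishes identically or has order at least $\min\{|\alpha|: x^\alpha\in I,\supp(\alpha)\subseteq\tL\}=\ord(I^\tL)$, because only monomial summands of $g$ with support in $\tL$ survive the evaluation and their leading coefficients are nonzero for generic $v$. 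Since $\ord(\gamma)=1$, this yields $\LL_0^{(i)}(I)\geq \ord(I^\tL)$, and maximizing over $\tL$ gives the desired inequality.

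For the reverse inequality, set $m=\max_{|\tL|=n-i+1}\ord(I^\tL)$; one must show that for every nontrivial curve $\gamma:(\C,0)\to (H,0)$ there exists $g\in I$ with $\ord(g\circ \gamma)\leq m\,\ord(\gamma)$. Writing $d_k=\ord(\gamma_k)$ and $\tL_\gamma=\supp(\gamma)$, the fact that $\gamma\subset H\cap \C^n_{\tL_\gamma}$ is $1$-dimensional forces $|\tL_\gamma|\geq n-i+1$ by genericity of $H$. Letting $\tL'_\gamma\subseteq\tL_\gamma$ consist of the $n-i+1$ indices of smallest $d_k$, one finds $g=x^\alpha\in I$ with $\supp(\alpha)\subseteq\tL'_\gamma$ and $|\alpha|=\ord(I^{\tL'_\gamma})\leq m$.

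The main obstacle is to close this upper bound: the naive estimate $\ord(g\circ\gamma)=\sum_{k\in\tL'_\gamma}\alpha_k d_k\leq |\alpha|\cdot \max_{k\in\tL'_\gamma}d_k$ is too weak when the $d_k$ are uneven. Overcoming this requires using the genericity of the $h_j$ more essentially---the relations $h_j\circ\gamma\equiv 0$ impose compatibility constraints on $(d_k)$---together with a careful combinatorial analysis of $\Gamma_+(I)$. A cleaner alternative is to bypass curves entirely and use the characterization $\LL_0^{(i)}(I)=\inf\{r/s: r,s\in\Z_{\geq 1},r\geq s, e_{i}(I^s+\m_n^r)=e_{i}(I^s)\}$ from \cite[Proposition 3.5]{Bivia2020}: for monomial $I$, Teissier's mixed-volume formula expresses $e_i(I)$ as a combinatorial invariant attached to $\Gamma_+(I)$, and the identity $e_i(I+\m_n^m)=e_i(I)$ reduces to the geometric statement that every $i$-dimensional face of $\Gamma_+(I)$ contributing to $e_i$ already lies in the half-space $\{|\alpha|\leq m\}$---a condition equivalent to $m\geq \max_{|\tL|=n-i+1}\ord(I^\tL)$.
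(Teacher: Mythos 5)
First, a point of reference: the paper does not prove Theorem \ref{Lojorder} at all; it is imported from \cite{BF1} as a recalled result, so there is no in-text argument to measure your proposal against. Judged on its own terms, your proposal sets up the right framework and settles the easy inequality, but leaves the substantive half unproved. The reduction to $\LL_0(IR_H)$ for a generic linear section $H$ via \cite[Lemma 3.9]{BF1}, followed by D'Angelo's curve formula (\ref{ordre}), is a legitimate strategy, and your lower bound is correct: for generic $H$ the line $H\cap\C^n_{\tL}$ is spanned by a vector whose entries are nonzero exactly on $\tL$, substitution kills every monomial of $I$ whose support meets the complement of $\tL$, and since a linearly parametrized line has order $1$ you get $\LL_0^{(i)}(I)\geq \ord(I^{\tL})$ for every $\tL$ with $\vert\tL\vert=n-i+1$.

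The genuine gap is the upper bound, which you explicitly flag but do not close: the estimate $\ord(x^\alpha\circ\gamma)=\sum_k\alpha_kd_k\leq\vert\alpha\vert\max_kd_k$ does not compare to $m\,\ord(\gamma)=m\min_kd_k$ when the orders $d_k=\ord(\gamma_k)$ are uneven, so as written the argument proves nothing in that direction. The missing ingredient is exactly the constraint you gesture at: if $\gamma$ lies on $H=V(h_1,\dots,h_{n-i})$ with generic coefficient matrix $A=(a_{jk})$, then at least $n-i+1$ of the components $\gamma_k$ attain the minimal order $d=\ord(\gamma)$. Indeed, the coefficient of $t^{d}$ in $h_j\circ\gamma$ is $\sum_ka_{jk}c_k$, where $c_k$ is the coefficient of $t^d$ in $\gamma_k$; these must all vanish, so the columns of $A$ indexed by $S=\{k:\ord(\gamma_k)=d\}$ carry a nontrivial linear relation, and since any $n-i$ columns of a generic $A$ are independent, $\vert S\vert\geq n-i+1$. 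Choosing $\tL'\subseteq S$ with $\vert\tL'\vert=n-i+1$ and a monomial $x^\alpha\in I$ with $\supp(\alpha)\subseteq\tL'$ and $\vert\alpha\vert=\ord(I^{\tL'})\leq m$ then gives $\ord(x^\alpha\circ\gamma)=\vert\alpha\vert\,d\leq m\,\ord(\gamma)$, which is what was needed. Without this step (your chosen $\tL'_\gamma$, the indices of \emph{smallest} $d_k$, need not consist of indices of \emph{minimal} $d_k$ unless one proves the claim above), and with the mixed-multiplicity alternative likewise only sketched (the passage from $e_i(I^s+\m_n^r)=e_i(I^s)$ to a face condition on $\Gamma_+(I)$ is asserted, not argued), the proposal does not yet constitute a proof.
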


Here we recall a result proven in \cite{BF1} that we will apply later.

\begin{prop}\cite{BF1}\label{linkmultLs}
Let $(R,\m)$ be a quasi-unmixed Noetherian local ring of dimension $n$.
Let $I_1,\dots, I_n, J$ be ideals of $R$ such that
$\sigma(I_1,\dots, I_n)<\infty$, $\sigma(I_1,\dots, I_{n-1}, J)<\infty$ and $I_n$ has finite colength.
Then
\begin{equation}\label{ineq1}
\frac{\sigma(I_1,\dots, I_n)}{\sigma(I_1,\dots, I_{n-1}, J)}\leq \LL_J(I_n).
\end{equation}

In particular, if $I$ is an ideal of $R$ of finite colength, we have
\begin{equation}\label{ineq2}
\frac{e(I)}{e_{n-1}(I)}\leq \LL_0(I)
\end{equation}
and equality holds if and only if
\begin{equation}\label{ineq22}
e_{n-1}(I)^ne(I)=e(I^{e_{n-1}(I)} + \m^{e(I)}).
\end{equation}
\end{prop}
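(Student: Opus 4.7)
The plan is to derive (\ref{ineq1}) from three basic properties of the Rees mixed multiplicity $\sigma$: multilinearity in each slot, $\sigma(I_1,\dots,I_{n-1},K^t)=t\,\sigma(I_1,\dots,I_{n-1},K)$; monotonicity, $K\subseteq L$ implies $\sigma(\dots,L)\leq\sigma(\dots,K)$; and invariance under integral closure, $\sigma(\dots,\overline K)=\sigma(\dots,K)$. All three are inherited from the corresponding classical properties of the mixed multiplicity $e$ on $\m$-primary ideals via the stabilization $\sigma(I_1,\dots,I_n)=e(I_1+\m^r,\dots,I_n+\m^r)$ for $r\gg 0$ recorded in (\ref{lasigma}); the finite colength hypothesis on $I_n$ guarantees that all the ideals appearing after the perturbations we make remain $\m$-primary.

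With these tools the core step is the chain
$$
b\,\sigma(I_1,\dots,I_n)=\sigma(I_1,\dots,I_{n-1},I_n^b)=\sigma(I_1,\dots,I_{n-1},\overline{I_n^b})\leq\sigma(I_1,\dots,I_{n-1},J^a)=a\,\sigma(I_1,\dots,I_{n-1},J),
$$
valid for any $a,b\in\Z_{\geq 1}$ with $J^a\subseteq\overline{I_n^b}$. Taking the infimum over such ratios $a/b$, which by (\ref{rsint}) equals $\LL_J(I_n)$, yields (\ref{ineq1}). Specializing to $I_1=\cdots=I_n=I$ and $J=\m$, so that $\sigma(I,\dots,I)=e(I)$, $\sigma(I,\dots,I,\m)=e_{n-1}(I)$ and $\LL_\m(I)=\LL_0(I)$, delivers (\ref{ineq2}) at once.

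For the equality characterization, the key observation will be that, by Rees' multiplicity theorem applied to the inclusion $I^s\subseteq I^s+\m^r$, the identity $e(I^s+\m^r)=e(I^s)=s^n e(I)$ is equivalent to $\m^r\subseteq\overline{I^s}$. Taking $s=e_{n-1}(I)$ and $r=e(I)$ shows that (\ref{ineq22}) is equivalent to $\m^{e(I)}\subseteq\overline{I^{e_{n-1}(I)}}$. The \emph{if} direction is then immediate, since the inclusion gives $\LL_0(I)\leq e(I)/e_{n-1}(I)$ by (\ref{rsint}) and thus, in combination with (\ref{ineq2}), forces equality.

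The harder converse will proceed as follows. Writing $\LL_0(I)=p/q$ in lowest terms, the equality $\LL_0(I)=e(I)/e_{n-1}(I)$ together with $\gcd(p,q)=1$ forces $e(I)=kp$ and $e_{n-1}(I)=kq$ for some $k\in\Z_{\geq 1}$; once $\m^p\subseteq\overline{I^q}$ is known, the target $\m^{e(I)}\subseteq\overline{I^{e_{n-1}(I)}}$ follows since a product of $k$ elements integral over $I^q$ lies in $\overline{I^{kq}}$. To secure $\m^p\subseteq\overline{I^q}$ I would use the attainment of the infimum in (\ref{rsint}): some pair of the form $(mp,mq)$ satisfies $\m^{mp}\subseteq\overline{I^{mq}}$, and then for $x\in\m^p$ one has $x^m\in\overline{I^{mq}}$, whose integral equation over $I^{mq}$ rewrites directly as an integral equation for $x$ over $I^q$ (the intermediate degrees carrying the coefficient zero, which is admissible). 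This \emph{$m$-th root} step for integral dependence is the place I expect the argument to demand the most care.
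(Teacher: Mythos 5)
The paper states this proposition without proof, importing it verbatim from \cite{BF1}, so there is no in-text argument to compare against; judged on its own, your reconstruction is correct and follows the natural route. Inequality (\ref{ineq1}) does come from exactly the three properties of $\sigma$ you list (homogeneity under powers, reverse monotonicity, and invariance under integral closure in a slot, all obtained from the stabilized mixed multiplicities $e(I_1+\m^r,\dots,I_n+\m^r)$), and your treatment of the equality case is sound: Rees' multiplicity theorem (here is where quasi-unmixedness is used) converts (\ref{ineq22}) into $\m^{e(I)}\subseteq\overline{I^{e_{n-1}(I)}}$, and the ``$m$-th root'' step you flag is the standard fact that $x^m\in\overline{K^m}$ forces $x\in\overline{K}$, for which your coefficient-padding manipulation of the integral equation is a valid proof.
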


Let $I$ be an ideal of $\O_n$ of finite colength. By \cite[p.\,635]{Hickel2} (see also \cite[Corollary 3.8]{BF1}) we know that
\begin{equation}\label{ineqHickel}
e(I)\leq \LL_0^{(1)}(I)\cdots \LL_0^{(n)}(I).
\end{equation}
Because of this result, we say that $I$ is a {\it Hickel ideal} or that $I$ has {\it maximal multiplicity}
when equality holds in (\ref{ineqHickel}). We refer to \cite{Bivia2015} for a characterization of
the equality $e(I)=\LL_0^{(1)}(I)\cdots \LL_0^{(n)}(I)$ when $\overline I$ is a monomial ideal.

\section{Log canonical threshold and \L ojasiewicz exponents}\label{lctI}

Let $I$ be an arbitrary $\m$-primary ideal of a given Noetherian local ring $(R,\m)$ of dimension $n$. We define the
{\it Demailly-Pham number of $I$}, denoted by $\DP(I)$, as
\begin{equation}\label{defDP(I)}
\DP(I)=\frac{1}{e_1(I)}+\frac{e_1(I)}{e_2(I)}+\cdots+\frac{e_{n-1}(I)}{e_n(I)}
\end{equation}
where we recall that $e_i(I)$ denotes the mixed multiplicity $e(I,\dots,I,\m,\dots, \m)$, with $I$ repeated $i$ times and $\m$
repeated $n-i$ times, for all $i=1,\dots, n$.

Let $I$ be an ideal of finite colength of $\O_n$. By a result of Demailly and Pham in \cite{DP}, it is known that
\begin{equation}\label{ineqDP}
\DP(I)\leq \lct(I).
\end{equation}
where $\lct(I)$ denotes the log canonical threshold of $I$. If $I$ is an arbitrary ideal of $\O_n$ and $g_1,\dots, g_r$ is a generating system
of $I$, we recall that
\begin{equation}\label{recallLctI}
\lct(I)=\sup\big\{s\in\R_{\geq 0}:
\bigl(|g_1(x)|^{2}+\cdots+|g_r(x)|^2\bigr)^{-s}\ \textrm{ is locally
 integrable at $0$}\big\}.
\end{equation}
It is straightforward to see that the member on the right of (\ref{recallLctI}) does not depend on the
choice of a generating system of $I$. We refer to \cite{Kollar,Mu2} for several equivalent
formulations of the log canonical threshold of an ideal and known properties of this concept.

We will refer to (\ref{ineqDP})
as the Demailly-Pham inequality. The number $\DP(I)$ verifies the following fundamental result (which is equivalent to the Rees' multiplicity theorem).

\begin{thm}\cite{Bivia2017}
Let $R$ be a Noetherian quasi-unmixed local ring.
Let $I_1,I_2$ be two proper ideals of finite colength of $R$ such that $I_1\subseteq I_2$. Then
\begin{equation}\label{esdeDP}
\DP(I_1)\leq\DP(I_2)
\end{equation}
and equality holds if and only if $\overline{I_1}=\overline{I_2}$.
\end{thm}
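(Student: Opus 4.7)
The strategy is to decompose $\DP(I_2)-\DP(I_1)$ termwise into $n$ differences, show that each is non-negative by a pairwise mixed-multiplicity inequality, and read the equality case off the classical Rees multiplicity theorem in its quasi-unmixed version.

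Write $a_i=e_i(I_1)$ and $b_i=e_i(I_2)$. Monotonicity of mixed multiplicities under inclusion gives $a_i\geq b_i$ for all $i\geq 1$. Expanding,
$$\DP(I_2)-\DP(I_1)=\biggl(\frac{1}{b_1}-\frac{1}{a_1}\biggr)+\sum_{i=2}^{n}\biggl(\frac{b_{i-1}}{b_i}-\frac{a_{i-1}}{a_i}\biggr),$$
so the monotonicity reduces to the pairwise inequalities
$$b_{i-1}\,a_i\;\geq\;a_{i-1}\,b_i\qquad(i=1,\dots,n),$$
with the convention $a_0=b_0=1$ in the first term. The case $i=1$ is immediate from $a_1\geq b_1$. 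For $i\geq 2$ I would establish the ratio-monotonicity $e_{i-1}(I_1)/e_i(I_1)\leq e_{i-1}(I_2)/e_i(I_2)$ by invoking \eqref{ineq1} from Proposition \ref{linkmultLs} along a chain of tuples that swap one copy of $I_1$ for $I_2$ inside the strings of ideals defining $e_i(I_j)$ and $e_{i-1}(I_j)$. The resulting upper bounds are \L ojasiewicz exponents of the form $\LL_{I_k}(I_\ell)$; these telescope on using the elementary identity $\LL_J(I)\cdot\LL_I(J)\geq 1$, which follows directly from the characterisation \eqref{rsint} applied to the composition $J^{rr'}\subseteq\overline{I^{sr'}}\subseteq\overline{J^{ss'}}$. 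Summing the pairwise inequalities over $i$ yields $\DP(I_1)\leq\DP(I_2)$.

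For the equality case, one direction is trivial: if $\overline{I_1}=\overline{I_2}$ then $a_i=b_i$ for every $i$, since mixed multiplicities depend only on the integral closure of their arguments, and hence $\DP(I_1)=\DP(I_2)$. Conversely, if $\DP(I_1)=\DP(I_2)$, each of the $n$ non-negative summands above must vanish, so $b_{i-1}a_i=a_{i-1}b_i$ for every $i$. With $a_0=b_0=1$, an induction on $i$ forces $a_i=b_i$ for all $i$; in particular $e(I_1)=a_n=b_n=e(I_2)$, and the classical Rees multiplicity theorem (available because $R$ is quasi-unmixed) yields $\overline{I_1}=\overline{I_2}$.

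The delicate point is the pairwise inequality for $i\geq 2$, i.e.\ the monotonicity under ideal inclusion of the ratio $e_{i-1}(\cdot)/e_i(\cdot)$. Neither the individual monotonicity of mixed multiplicities nor Teissier's log-convexity $e_i^2\leq e_{i-1}\,e_{i+1}$ is enough on its own; the essential extra input is the \L ojasiewicz-Rees estimate \eqref{ineq1}, whose use here requires careful bookkeeping of swap chains and joint reductions, as well as a strict version to trace equality back to componentwise equality of mixed multiplicities. As the author's parenthetical remark suggests, this ratio-monotonicity sits at the same level of depth as Rees' multiplicity theorem itself, which is the core of both the inequality and the sharp equality characterisation.
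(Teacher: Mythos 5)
You should first note that the paper does not actually prove this statement: it is imported from \cite{Bivia2017}, so the only thing to check is whether your argument stands on its own. It does not, because the termwise inequality on which everything rests --- $b_{i-1}a_i\geq a_{i-1}b_i$, i.e.\ $e_{i-1}(I_1)/e_i(I_1)\leq e_{i-1}(I_2)/e_i(I_2)$ --- is false. Take $n=2$ and $I_1=\langle x^2,xy,y^{10}\rangle\subseteq I_2=\langle x,y^{10}\rangle$ in $\O_2$. Then $e_1(I_1)=\ord(I_1)=2$ and $e_2(I_1)=e(I_1)=12$ (note $I_1=\langle x,y^{9}\rangle\m_2$, so $e(I_1)=9+2+1=12$), while $e_1(I_2)=1$ and $e_2(I_2)=10$. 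For $i=2$ your inequality reads $1\cdot 12\geq 2\cdot 10$, which fails; equivalently $\frac{e_1(I_1)}{e_2(I_1)}=\frac{1}{6}>\frac{1}{10}=\frac{e_1(I_2)}{e_2(I_2)}$, so the second summand of $\DP(I_1)$ is strictly \emph{larger} than that of $\DP(I_2)$. The theorem of course still holds here, since $\DP(I_1)=\frac12+\frac16=\frac23<\frac{11}{10}=\DP(I_2)$: the loss in one summand is absorbed by the gain in another. This also shows that no bookkeeping of swap chains through (\ref{ineq1}), nor the identity $\LL_J(I)\,\LL_I(J)\geq 1$, can rescue the step, because the ratio-monotonicity you are trying to establish is simply not a true statement. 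And once the termwise decomposition into nonnegative summands is gone, the converse of the equality case collapses too: from $\DP(I_1)=\DP(I_2)$ you cannot conclude that the summands agree one by one, so you never reach $e(I_1)=e(I_2)$, which is the input you need for Rees' multiplicity theorem.

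What survives of your proposal is the easy direction of the equality case (mixed multiplicities depend only on integral closures), the case $i=1$ of the termwise comparison (which is just $\ord(I_1)\geq\ord(I_2)$), and the correct observation that $e_i(I_1)\geq e_i(I_2)$ for every $i$. The genuine content lies elsewhere: for a \emph{fixed} ideal $I$ the Teissier--Rees--Sharp inequalities $e_i(I)^2\leq e_{i-1}(I)e_{i+1}(I)$ (valid in quasi-unmixed rings) make the ratios $e_{i-1}(I)/e_i(I)$ non-increasing in $i$, and the comparison of $\DP(I_1)$ with $\DP(I_2)$ must then be made \emph{globally}, exploiting that all partial products of the ratios for $I_1$ are dominated by those for $I_2$ (this is exactly the family of inequalities $e_k(I_1)\geq e_k(I_2)$, $k=1,\dots,n$) --- a majorization-type argument on the whole sum, not a term-by-term one. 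The equality case then has to be traced through that global argument down to $e(I_1)=e(I_2)$ before Rees' theorem is invoked; this is the sense in which the parenthetical remark about equivalence with Rees' multiplicity theorem should be read.
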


We say that a given property $(P_t)$ depending on a parameter $t\in\C^r$ holds for all $\vert t\vert \ll 1$ if there exists
an open ball $U$ centered at $0$ in $\C^r$ such that the said property $(P_t)$ holds provided that $t\in U$.

Let $f_t:(\C^n,0)\to (\C,0)$ be an analytic deformation, where $t$ belongs to a parameter space $\C^r$. Let us suppose that $f_t$ has an isolated singularity at the origin, for all $\vert t\vert \ll 1$. Then $\DP(J(f_t))$ is lower semicontinuous, that is, $\DP(J(f_0))\leq\DP(J(f_t))$, for all $\vert t\vert \ll 1$ (see \cite[Corollary 12]{Bivia2017}). Moreover $\DP(J(f_t))$ is constant, for $\vert t\vert \ll 1$, if and only if $\mu^*(f_t)$ is constant, for $\vert t\vert \ll 1$ (see \cite[Corollary 12]{Bivia2017} also).

Let $(R,\m)$ be a Noetherian local ring of dimension $n$. Given an $\m$-primary ideal $I$ of $R$, we define
the number $\Li(I)$ given by
$$
\Li(I)=\sum_{i=1}^n\frac{1}{\LL_0^{(i)}(I)}.
$$

\begin{prop}\label{propdesLDP}
Let $(R,\m)$ be a Noetherian local ring of dimension $d$.
Let $I$ be an $\m$-primary ideal of $R$. Then
\begin{equation}\label{ineqLDP}
\Li(I)\leq \DP(I)
\end{equation}
Moreover, the following conditions are equivalent:
\begin{enumerate}[label=\textnormal{(\alph*)}]
\item\label{propeq} Equality holds in \textnormal{(\ref{ineqLDP})}.
\item\label{propeq2} $\LL_0^{(i)}(I)=\frac{e_i(I)}{e_{i-1}(I)}$, for all $i=1,\dots, n$.
\item\label{propmults} $e_{i-1}(I)^{i}e_i(I)=e_i(I^{e_{i-1}(I)} + \m^{e_i(I)})$, for all $i=1,\dots, n$.
\item\label{propHick} $I$ is a Hickel ideal.
\end{enumerate}
\end{prop}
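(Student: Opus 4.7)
The plan is to reduce the multi-term inequality (\ref{ineqLDP}) to $n$ instances of the one-term bound (\ref{ineq2}) of Proposition~\ref{linkmultLs}, one per generic hyperplane section. Fix $i\in\{1,\dots,n\}$ and cut $R$ by $n-i$ generic linear forms $h_1,\dots,h_{n-i}$ to obtain the quasi-unmixed quotient $R'=R/\langle h_1,\dots,h_{n-i}\rangle$ of dimension $i$. Writing $I'$ for the image of $I$ in $R'$, the identification recalled in Section~\ref{preliminary} (via \cite[Proposition 3.9]{BF1}) gives $\LL_0^{(i)}(I)=\LL_0(I')$, while the standard generic-section formula expressing mixed multiplicities as ordinary multiplicities after successive superficial cuts gives $e(I')=e_i(I)$ and $e_{i-1}(I')=e_{i-1}(I)$. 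Applying (\ref{ineq2}) inside $R'$ yields
$$
\frac{e_i(I)}{e_{i-1}(I)}\leq \LL_0^{(i)}(I),\qquad i=1,\dots,n.
$$
Taking reciprocals, summing over $i$, and using $e_0(I)=1$ to telescope the right-hand side into $\DP(I)$, this is exactly (\ref{ineqLDP}).

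Since (\ref{ineqLDP}) is the sum of $n$ inequalities pointing the same way, equality in the total forces equality in every term, giving \ref{propeq} $\Leftrightarrow$ \ref{propeq2}. The equivalence \ref{propeq2} $\Leftrightarrow$ \ref{propmults} follows by invoking inside each $R'$ the equality-characterization (\ref{ineq22}) of Proposition~\ref{linkmultLs}: the identity $e_{i-1}(I')^{i}e(I')=e(I'^{e_{i-1}(I')}+\m'^{e(I')})$ in $R'$ translates, under the same generic-section dictionary used in paragraph one, to the equation displayed in \ref{propmults} inside $R$.

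For \ref{propeq2} $\Leftrightarrow$ \ref{propHick}, I would telescope the product $\prod_{i=1}^n e_i(I)/e_{i-1}(I)=e(I)$, which uses only $e_0(I)=1$. If \ref{propeq2} holds, then $\prod_{i=1}^n \LL_0^{(i)}(I)=e(I)$, so $I$ is Hickel. Conversely, if $I$ is Hickel, multiplying the pointwise inequalities $e_i(I)/e_{i-1}(I)\leq \LL_0^{(i)}(I)$ across $i=1,\dots,n$ yields matching endpoints, forcing equality in each factor, which is \ref{propeq2}.

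The main obstacle I expect is verifying that a single sufficiently generic choice of linear forms $h_1,\dots,h_{n-i}$ simultaneously realizes the \L ojasiewicz-exponent identification, the two mixed-multiplicity reductions, and the descent of the equality (\ref{ineq22}) from $R'$ back to $R$. Each of these is a Zariski-open condition on the linear forms (over an infinite residue field), so their intersection is non-empty, but the compatibility check is what requires care; once it is in place, the remainder of the argument is just the summation and telescoping above.
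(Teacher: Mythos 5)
Your proposal is correct and follows essentially the same route as the paper: the inequality and the equivalence of (a), (b), (c) are obtained exactly as in the text, by applying inequality (\ref{ineq2}) and the equality criterion (\ref{ineq22}) of Proposition \ref{linkmultLs} in each generic linear section and summing the resulting termwise bounds. The one point where you do more than the paper is the equivalence of (b) and (d): the paper cites \cite[Lemma 5.5]{BF2}, whereas your telescoping-plus-positivity argument (multiplying the inequalities $e_i(I)/e_{i-1}(I)\leq \LL_0^{(i)}(I)$ and comparing with $e(I)=\prod_i e_i(I)/e_{i-1}(I)$) proves it directly and is a valid, self-contained substitute.
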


\begin{proof}
By relation (\ref{ineq2}) we have that item\,\ref{propeq} is equivalent to saying that
$\LL_0^{(i)}(I)=\frac{e_i(I)}{e_{i-1}(I)}$, for all $i=1,\dots, n$. Therefore
the equivalence between \ref{propeq}, \ref{propeq2} and \ref{propmults} follows as a direct application of
Proposition \ref{linkmultLs}. The equivalence between \ref{propeq2} and \ref{propHick} is proven in
\cite[Lemma 5.5]{BF2}.
\end{proof}

\begin{rem}
In general, if $I$ is an ideal of $\O_n$ of finite colength, there is no relation between $B(I)$ and the lower bound $ne(I)^{-\frac 1n}$ for $\lct(I)$ shown by de Fernex et al. in \cite[Theorem 1.2]{dFEM1}. For instance, let us consider the ideal of $\O_3$
given by $I=\langle xy^2z, x^5, y^6, z^5\rangle$.
We have $\Li(I)=\frac{37}{60}$ and $e(I)=110$; consequently $\Li(I)<3e(I)^{-\frac 13}$.
Let us consider the ideal $J=\langle xyz, x^5, y^6, z^5\rangle$. In this case we have $e(J)=85$ and $\Li(J)=\frac{7}{10}$.
Therefore $\Li(J)>3e(J)^{-\frac 13}$.
\end{rem}

Let $I$ and $J$ be ideals of a Noetherian local ring $(R,\m)$ of dimension $n$. We recall that $I$ and $J$ are called {\it projectively equivalent} when there exist integers $a,b\geq 1$ such that $\overline{I^a}=\overline{J^b}$ (see \cite{CHRR} or \cite[p.\,210]{HS}). We remark that $I$ is projectively equivalent to $\m$ if and only if there exists some $r\in\Z_{\geq 1}$ such that $\overline I=\m^r$. Let us recall that if $I$ and $J$ are $\m$-primary, then $e(IJ)^{1/n}\leq e(I)^{1/n}+e(J)^{1/n}$ (see \cite{RSharp}), and, if $R$ is quasi-unmixed, then equality
holds if and only if $I$ and $J$ are projectively equivalent (see \cite{Katz,RSharp} and \cite[p.\,359]{HS}). We refer to \cite{BLQ} for a recent generalization of this result.

\begin{thm}\label{ThmdesLi}
Let $I$ and $J$ be proper ideals of $\O_n$ of finite colength. Then
\begin{equation}\label{desLi}
\LL_0^{(i)}(IJ)\leq \LL_0^{(i)}(I)+\LL_0^{(i)}(J)
\end{equation}
for all $i\in\{1,\dots, n\}$. Equality holds provided that either $I$ and $J$ are projectively equivalent or $J=\m_n$.
\end{thm}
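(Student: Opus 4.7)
\emph{Step 1: reduction to ordinary {\L}ojasiewicz exponents.} My plan is to restrict to a common generic linear subspace and then prove a subadditivity statement for ordinary {\L}ojasiewicz exponents in a regular local ring. Recall from \cite[Proposition 3.9]{BF1}, quoted just before Theorem \ref{Lojorder}, that $\LL_0^{(i)}(I)$ equals the classical exponent $\LL_0(I^H)$ of the image of $I$ in the quotient $\O_n/\langle h_1,\dots,h_{n-i}\rangle$ for $h_1,\dots,h_{n-i}$ generic linear forms. Since the three genericity conditions (for $I$, $J$ and $IJ$) each cut out an open dense subset of the Grassmannian parametrizing such subspaces, a common generic $H$ can be chosen. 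Since restriction commutes with products and sends $\m_n$ to the maximal ideal $\m_H$ of the quotient, the problem reduces to proving, for an arbitrary pair of $\m$-primary ideals $A,B$ in a regular local ring $(R,\m)$ of dimension $i$, that $\LL_0(AB)\leq\LL_0(A)+\LL_0(B)$, with equality when $B=\m$ or when $A,B$ are projectively equivalent.

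\emph{Step 2: the inequality.} I would exploit (\ref{rsint}): write $\LL_0(A)=a_1/b_1$ with $\m^{a_1}\subseteq\overline{A^{b_1}}$ and $\LL_0(B)=a_2/b_2$ with $\m^{a_2}\subseteq\overline{B^{b_2}}$, and set $s=b_1 b_2$. Raising to suitable powers gives $\m^{a_1 b_2}\subseteq\overline{A^{s}}$ and $\m^{a_2 b_1}\subseteq\overline{B^{s}}$, so multiplication produces
\[
\m^{a_1 b_2+a_2 b_1}\subseteq\overline{A^{s}}\cdot\overline{B^{s}}\subseteq\overline{A^{s}B^{s}}=\overline{(AB)^{s}}.
\]
By (\ref{rsint}) again, this says $\LL_0(AB)\leq (a_1 b_2+a_2 b_1)/s=\LL_0(A)+\LL_0(B)$, i.e.\ inequality (\ref{desLi}).

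\emph{Step 3: the equality cases.} When $B=\m$ the reduced claim is $\LL_0(A\m)=\LL_0(A)+1$. For the non-trivial direction ``$\geq$'' I would use the arc characterization (\ref{ordre}) to pick an analytic arc $\gamma$ realizing $\LL_0(A)=\inf_{f\in A}\ord(f\circ\gamma)/\ord(\gamma)$; every product $fg$ with $f\in A$, $g\in\m$ then satisfies $\ord((fg)\circ\gamma)\geq(\LL_0(A)+1)\ord(\gamma)$, and since any $h\in A\m$ is a sum of such products the same lower bound transfers to $h\circ\gamma$. When instead $\overline{I^a}=\overline{J^b}$, I would first check that projective equivalence descends to the generic restriction by reducing a defining integrality equation modulo $\langle h_1,\dots,h_{n-i}\rangle$, obtaining $\overline{(I^H)^a}=\overline{(J^H)^b}=:K$. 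From the identity $\LL_0(M^s)=s\LL_0(M)$ (immediate from (\ref{rsint})) and the inclusions $(I^H)^{ab}\subseteq K^{b}$, $(J^H)^{ab}\subseteq K^{a}$, which give $(I^H J^H)^{ab}\subseteq K^{a+b}$, I would conclude
\[
ab\,\LL_0(I^H J^H)\geq\LL_0(K^{a+b})=(a+b)\LL_0(K)=ab\bigl(\LL_0(I^H)+\LL_0(J^H)\bigr),
\]
which combined with Step 2 forces equality. The most delicate point I anticipate is the descent of projective equivalence through the generic restriction, where one must check that the integrality equation survives reduction modulo the generic linear forms cleanly enough to identify the two integral closures in the quotient.
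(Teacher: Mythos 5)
Your proposal is correct and, for most of its length, follows the same route as the paper: the reduction to a common generic linear section via \cite[Lemma 3.9]{BF1} (your Step 1), the subadditivity argument through the integral-closure description (\ref{rsint}) (your Step 2), and the projective-equivalence case (which you phrase as the inclusion $(I^HJ^H)^{ab}\subseteq K^{a+b}$ plus monotonicity of $\LL_0$ under inclusion, whereas the paper writes the equality $\overline{I^aJ^a}=\overline{J^{a+b}}$ and computes $\LL_0$ of both sides directly; these are the same computation, and your observation that projective equivalence descends to $R_H$ because integrality equations survive reduction modulo the linear forms is exactly the paper's remark that $\overline{I^aR_H}=\overline{\overline{J^b}R_H}$). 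The one genuinely different step is the case $J=\m_n$. The paper derives $\LL_0(\m_nI)\geq\LL_0(I)+1$ from the cancellation theorem for integral closures: if $\m_n^p\subseteq\overline{\m_n^qI^q}$ with $p\geq q$, then $\m_n^q\m_n^{p-q}\subseteq\overline{\m_n^qI^q}$ forces $\m_n^{p-q}\subseteq\overline{I^q}$. You instead use the order-of-contact formula (\ref{ordre}): for any arc $\gamma$ and any $h=\sum_k f_kg_k\in I\m_n$ one has $\ord(h\circ\gamma)\geq\inf_{f\in I}\ord(f\circ\gamma)+\ord(\gamma)$. Both are valid; the cancellation argument is purely ideal-theoretic and works verbatim in any Noetherian local domain, while the arc argument is specific to rings of the form $\O_i$ (which suffices here, since the generic section $R_H$ is isomorphic to $\O_i$ --- though you should say this rather than ``an arbitrary regular local ring''). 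Note also that you do not actually need an arc \emph{realizing} the supremum in (\ref{ordre}): the inequality $\inf_{h\in I\m_n}\ord(h\circ\gamma)/\ord(\gamma)\geq\inf_{f\in I}\ord(f\circ\gamma)/\ord(\gamma)+1$ holds arc by arc, and taking the supremum over $\gamma$ at the end gives the bound; phrasing it this way avoids invoking the (true, but unquoted) attainment result from \cite{LT1974}.
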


\begin{proof}
Let us prove first that
\begin{equation}\label{LojIJ}
\LL_0(IJ)\leq \LL_0(I)+\LL_0(J).
\end{equation}

Let $a,b,r,s\in \Z_{\geq 1}$ such that $\LL_0(I)\leq \frac ab$ and $\LL_0(J)\leq \frac rs$. Therefore,
$\m^a\subseteq\overline{I^b}$ and $\m^r\subseteq\overline{J^s}$.
Consequently, we have
\begin{align*}
\m^{sa}&\subseteq{\overline{I^b}}^{\,s}\subseteq \overline{{\overline{I^b}}^{\,s}}=\overline{I^{bs}}\\
\m^{rb}&\subseteq\overline{J^s}\subseteq \overline{{\overline{J^s}}^{\,b}}=\overline{J^{bs}}.
\end{align*}

Hence
$$
\m^{sa+rb}\subseteq \overline{I^{bs}}\,\overline{J^{bs}}
\subseteq\overline{{\overline{I^{bs}}\,\overline{J^{bs}}}}=\overline{(IJ)^{bs}}.
$$
The above relation shows that $\LL_0(IJ)\leq \frac{sa+rb}{bs}=\frac ab+\frac rs$. Therefore, relation (\ref{LojIJ}) follows.
Let us remark that the same argument shown above also proves relation (\ref{LojIJ}) for any pair of proper ideals $I$ and $J$ of
any given Noetherian local ring.

Let us fix an integer $i\in\{1,\dots, n-1\}$.  By \cite[Lemma 3.9]{BF1},
there exist a family of $n-i$ linear forms $h_1,\dots, h_{n-i}\in\C[x_1,\dots, x_n]$ such that, if $H$ denotes the ideal of $\O_n$ generated by these forms and $R_H$ denotes
the quotient ring $\O_n/H$, then we have
\begin{align}
\LL_0^{(i)}(I)&=\LL_0(IR_H)   \label{LiH1}\\
\LL_0^{(i)}(J)&=\LL_0(JR_H)   \label{LiH2}\\
\LL_0^{(i)}(IJ)&=\LL_0(IJR_H).\label{LiH3}
\end{align}

Therefore, by applying relation (\ref{LojIJ}) in the context of ideals of $R_H$, we obtain (\ref{desLi}).

Let us suppose that $I$ and $J$ are projectively equivalent. So let $a,b\in \Z_{\geq 1}$ such that
$\overline{I^a}=\overline{J^b}$. In particular we obtain that $a\LL_0(I)=b\LL_0(J)$ and
$$
\overline{I^aJ^a}=\overline{J^bJ^a}=\overline{J^{a+b}}.
$$
Therefore
$$
a\LL_0(IJ)=\LL_0(I^aJ^a)=\LL_0(J^{a+b})=(a+b)\LL_0(J).
$$
That is
$$
\LL_0(IJ)=\frac{a+b}{b}\LL_0(J)=\frac{a}{b}\LL_0(J)+\LL_0(J)=\LL_0(I)+\LL_0(J).
$$

The same argument shown above serves to show the equality $\LL_0(IJ)=\LL_0(I)+\LL_0(J)$ for any pair
of projectively equivalent ideals $I$ and $J$ of
any given Noetherian local ring. Let us fix an index $i\in\{1,\dots,n\}$ and let us consider
relations (\ref{LiH1})-(\ref{LiH3}), for a given ideal $H$ generated by $n-i$ linear forms of $\C[x_1,\dots, x_n]$.
The projective equivalence $I$ and $J$ implies that $IR_H$ and $JR_H$ are projectively equivalent,
since $\overline{I^aR_H}=\overline{\overline{J^b}R_H}$.
So, by relations (\ref{LiH1})-(\ref{LiH3})  we conclude that
$$
\LL_0^{(i)}(IJ)=\LL^{(i)}_0(I)+\LL^{(i)}_0(J)
$$
for all $i=1,\dots, n$.

Let us suppose now that $J=\m_n$. Let $\frac{p}{q}\geq \LL_0(\m_nI)$, where $p,q\in\Z_{\geq 1}$. Since $\m_nI$ is contained in $\m_n$, we have
$\LL_0(\m_nI)\geq \LL_0(\m_n)=1$, so $p\geq q$. Therefore we have
$$
\m_n^q\m_n^{p-q}\subseteq \overline{\m_n^qI^q}.
$$
Thus the cancellation theorem for integral closures (see \cite[p.\,20]{HS}) implies that
$$
\m_n^{p-q}\subseteq \overline{I^q}.
$$
So $\LL_0(I)\leq \frac{p-q}{q}=\frac{p}{q}-1$. In particular, we obtain that $\LL_0(\m_nI)\geq \LL_0(I)+1$.
The reverse inequality holds by (\ref{LojIJ}), thus $\LL_0(\m_nI)=\LL_0(I)+1$.
Let $i\in\{1,\dots, n-1\}$. As before, by considering the images of $I$ and $\m_nI$ in $R_H$, being $H$ an ideal
generated by $n-i$ linear forms of $\C[x_1,\dots, x_n]$, we also conclude that $\LL^{(i)}_0(\m_nI)=\LL^{(i)}_0(I)+1$.
\end{proof}

\begin{rem}
Obviously, the inequality $\LL_0(IJ)\leq \LL_0(I)+\LL_0(J)$ can be strict. This happens, for instance,
for the ideals of $\O_2$ given by $I=\langle x^2, y^3\rangle$ and $J=\langle x^3, y^2\rangle$.
\end{rem}

In the next result we generalize the inequality of \cite[Theorem 1.1]{Kim} to any pair of ideals $I$ and $J$.
We provide a chain of inequalities relating mixed \L ojasiewicz exponents, mixed multiplicities and log canonical thresholds; we also give conditions implying that the corresponding equalities hold.

Let us first recall the main result of Howald in \cite{Howald} stating that if $I$ is a monomial ideal of $\O_n$
(with respect to a given system of coordinates in $\C^n$), then $\lct(I)$ can be obtained through the Newton polyhedron of $I$
by the formula
\begin{equation}\label{mainHowald}
\lct(I)=\frac{1}{\min\{\mu>0: \mu\cdot(1,\dots, 1)\in\Gamma_+(I)\}}.
\end{equation}
As shown in \cite[Theorem 5.4]{BF2}, if $x_1,\dots, x_n$ denote the coordinates in $\C^n$ with respect to which
$I$ is monomial, the right part of (\ref{mainHowald}) can be written as $\frac{1}{\LL_{x_1\cdots x_n}(I)}$.

\begin{thm}\label{novafita} Let $I$ and $J$ be ideals of $\O_n$ of finite colength. Then
\begin{equation}\label{desKim}
\sum_{i=1}^n\frac{1}{\LL_0^{(i)}(I)+\LL_0^{(i)}(J)} \leq \sum_{i=1}^n\frac{1}{\LL_0^{(i)}(IJ)}\leq \DP(IJ)\leq \lct(IJ).
\end{equation}
Moreover the following holds:
\begin{enumerate}[label=\textnormal{(\alph*)}]
\item\label{igs1} If $I$ and $J$ are projectively equivalent and $I$, or $J$, is diagonal, then all inequalities of \textnormal{(\ref{desKim})} become equalities.
\item\label{igs2} If $I$ and $J$ are projectively equivalent or $J=\m_n$, then equality holds in the first inequality of \textnormal{(\ref{desKim})}.
\item\label{igs3} Equality holds in the second inequality of \textnormal{(\ref{desKim})} if and only if $IJ$ is a Hickel ideal.
\end{enumerate}
\end{thm}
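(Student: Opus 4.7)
My plan is to decompose the chain in \textnormal{(\ref{desKim})} into three building blocks that are already available. First, Theorem~\ref{ThmdesLi} gives $\LL_0^{(i)}(IJ)\leq \LL_0^{(i)}(I)+\LL_0^{(i)}(J)$ for every $i$, so taking reciprocals and summing yields the first inequality of \textnormal{(\ref{desKim})}. Second, since $IJ$ is $\m_n$-primary, Proposition~\ref{propdesLDP} applied to $IJ$ gives $\Li(IJ)=\sum_i 1/\LL_0^{(i)}(IJ)\leq \DP(IJ)$. Third, the Demailly--Pham inequality \textnormal{(\ref{ineqDP})} applied to $IJ$ gives $\DP(IJ)\leq \lct(IJ)$. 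Concatenating these three bounds produces the chain.

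Once this skeleton is in place, items \ref{igs2} and \ref{igs3} are immediate. Part \ref{igs2} follows from the equality statement in Theorem~\ref{ThmdesLi}: both the projectively equivalent case and the case $J=\m_n$ give $\LL_0^{(i)}(IJ)=\LL_0^{(i)}(I)+\LL_0^{(i)}(J)$ for every $i$, which forces equality in the first inequality of \textnormal{(\ref{desKim})}. Part \ref{igs3} is the equivalence of conditions \ref{propeq} and \ref{propHick} in Proposition~\ref{propdesLDP} applied to $IJ$: the bound $\Li(IJ)\leq \DP(IJ)$ is an equality exactly when $IJ$ is a Hickel ideal.

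The substantive step is part \ref{igs1}; my plan there is to evaluate the leftmost and rightmost quantities of \textnormal{(\ref{desKim})} explicitly and observe that they coincide, so that the sandwich collapses all four terms to a single value. Writing $I=\langle x_1^{a_1},\dots,x_n^{a_n}\rangle$ with $a_1\leq\cdots\leq a_n$ and picking $p,q\geq 1$ with $\overline{I^p}=\overline{J^q}$, Theorem~\ref{Lojorder} gives $\LL_0^{(i)}(I)=a_i$; projective equivalence together with the scaling $\LL_0^{(i)}(K^m)=m\LL_0^{(i)}(K)$ gives $\LL_0^{(i)}(J)=(p/q)a_i$; and Theorem~\ref{ThmdesLi} delivers $\LL_0^{(i)}(IJ)=\tfrac{p+q}{q}a_i$. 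On the log canonical threshold side, the identity $\overline{(IJ)^q}=\overline{I^q\,\overline{J^q}}=\overline{I^{p+q}}$ combined with $\lct(K^m)=\lct(K)/m$ reduces $\lct(IJ)$ to $q\,\lct(I)/(p+q)$, while Howald's formula \textnormal{(\ref{mainHowald})} applied to the diagonal ideal $I$ gives $\lct(I)=\sum_i 1/a_i$. A short arithmetic comparison then shows that both $\sum_i 1/(\LL_0^{(i)}(I)+\LL_0^{(i)}(J))$ and $\sum_i 1/\LL_0^{(i)}(IJ)$ equal $\tfrac{q}{p+q}\sum_i 1/a_i$, which is exactly $\lct(IJ)$.

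The main obstacle I anticipate is the integral-closure bookkeeping for part \ref{igs1}: verifying $\overline{(IJ)^q}=\overline{I^{p+q}}$ cleanly uses $\overline{J^q}=\overline{I^p}$ together with $\overline{KL}=\overline{K\,\overline{L}}$, and the two scaling identities $\lct(K^m)=\lct(K)/m$ and $\LL_0^{(i)}(K^m)=m\LL_0^{(i)}(K)$ must be invoked consistently so that both sides are normalized the same way. Once these manipulations are in place, everything else in \ref{igs1} is routine arithmetic and the collapse of \textnormal{(\ref{desKim})} to a single identity is automatic.
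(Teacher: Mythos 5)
Your proof is correct, and for the chain (\ref{desKim}) and for items \ref{igs2} and \ref{igs3} it follows exactly the paper's route: Theorem \ref{ThmdesLi} for the first inequality and its equality cases, Proposition \ref{propdesLDP} for the second inequality and the Hickel characterization, and the Demailly--Pham inequality (\ref{ineqDP}) for the third. Where you genuinely diverge is item \ref{igs1}. The paper first proves that $J$ (and hence $IJ$) is itself diagonal: from $\overline{J^b}=\overline{\langle x_1^{ar_1},\dots,x_n^{ar_n}\rangle}$ it invokes the characterization of ideals with monomial integral closure to deduce that $J$ is Newton non-degenerate, reads off $\LL_0^{(i)}(J)$ from Theorem \ref{Lojorder}, concludes $\overline{IJ}=\overline{\langle x_1^{r_1+s_1},\dots,x_n^{r_n+s_n}\rangle}$, and applies Howald's identity (\ref{mainHowald}) directly to $IJ$. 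You instead apply Howald only to the diagonal ideal $I$ and transfer everything to $IJ$ through the identities $\overline{(IJ)^q}=\overline{I^{p+q}}$, $\lct(K^m)=\lct(K)/m$, $\lct(\overline K)=\lct(K)$ and $\LL_0^{(i)}(K^m)=m\,\LL_0^{(i)}(K)$. This is a legitimate shortcut: it avoids the Newton non-degeneracy argument entirely, at the price of invoking the scaling and integral-closure invariance of $\lct$, facts the paper does use (in the remark following the theorem) but does not need inside this proof. What the paper's longer route buys is the extra structural information that $J$ and $IJ$ are themselves diagonal with explicit exponents, which your sandwich argument bypasses. One small point worth making explicit in your write-up: the step $\LL_0^{(i)}(J)=(p/q)a_i$ uses that $\LL_0^{(i)}$ depends only on the integral closure of (powers of) the ideal; this is available via the generic-section description $\LL_0^{(i)}(I)=\LL_0(IR_H)$ and (\ref{rsint}), exactly as the paper argues when passing projective equivalence down to $R_H$.
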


\begin{proof}
The first inequality of (\ref{desKim}) is a direct consequence of (\ref{desLi}). The second inequality comes from
Proposition \ref{propdesLDP}.
The third inequality is the Demailly-Pham inequality (\ref{ineqDP}).

Let us prove item \ref{igs1}. Let us suppose that $I$ and $J$ are projectively equivalent. Let us fix coordinates $x_1,\dots, x_n$ in $\C^n$ with respect to which $I$ is diagonal.
So there exist $r_1,\dots, r_n\in \Z_{\geq 1}$ such that
$$
\overline I=\overline{\langle x_1^{r_1},\dots,x_n^{r_n} \rangle}.
$$
By permuting the order of the variables, if necessary, we can assume that $r_1\leq \cdots \leq r_n$. Let us see that $J$ is also diagonal with respect to $x_1,\dots, x_n$.

Let $a,b\in\Z_{\geq 1}$ such that $\overline{I^a}=\overline{J^b}$. Therefore
\begin{equation}\label{Jb}
\overline{J^b}=\overline{\langle x_1^{ar_1},\dots,x_n^{ar_n} \rangle}.
\end{equation}

The ideal on the right of (\ref{Jb}) is a monomial ideal. So, by the theorem of characterization of ideals with monomial integral closure
(see \cite[Theorem 2.11]{BFS}) we conclude that $J^b$ is Newton non-degenerate.
In particular, $J$ is Newton non-degenerate. This implies that $\LL^*_0(J)\in \Z^n_{\geq 1}$, by (\ref{eqLojorder}).
Also by (\ref{eqLojorder}), we have that
$$
\LL_0^{(i)}(J^b)=b\LL_0^{(i)}(J)=ar_i
$$
for all $i=1,\dots, n$. That is $\LL_0^{(i)}(J)=\frac{ar_i}{b}$ and $b$ divides $ar_i$ for all $i=1,\dots, n$.
Let us denote $\LL_0^{(i)}(J)$ by $s_i$, for all $i=1,\dots, n$. Therefore
$$
\overline{J^b}=\overline{\langle x_1^{a r_1},\dots,x_n^{a r_n} \rangle}=
\overline{\langle x_1^{bs_1},\dots,x_n^{bs_n} \rangle}=\overline{\langle x_1^{s_1},\dots,x_n^{s_n} \rangle^b}.
$$
Hence we have
$$
\overline J=\overline{\langle x_1^{s_1},\dots,x_n^{s_n} \rangle}.
$$
That is, $J$ is diagonal. Since $\frac{r_i}{s_i}=\frac ab$, for all $i=1,\dots, n$, we also conclude that
$$
\overline{IJ}=\overline{\overline I\,\overline J}=\overline{\langle x_1^{r_1+s_1},\dots, x_n^{r_n+s_n}\rangle}.
$$
So $IJ$ is also diagonal, which implies that $\LL_0^{(i)}(IJ)=\frac{1}{r_i+s_i}$, for all $i=1,\dots, n$. By Howald's identity (\ref{mainHowald}) we have that
$$
\lct(IJ)=\frac{1}{r_1+s_1}+\cdots+\frac{1}{r_n+s_n}=\Li(IJ).
$$
Hence item \ref{igs1} follows.

Items \ref{igs2} and \ref{igs3} are direct applications of Proposition \ref{propdesLDP} and Theorem \ref{ThmdesLi},
respectively.
\end{proof}

\begin{rem}
\begin{enumerate}[label=(\alph*),wide]\itemsep0.5em
\item Let us fix coordinates $x_1,\dots, x_n$ in $\C^n$. Let $I$ be an ideal of $\O_n$ and let $I^0$ denote the ideal of $\O_n$ generated by all monomials $x_1^{k_1}\cdots x_n^{k_n}$ for which $(k_1,\dots, k_n)\in\Gamma_+(I)$. Obviously $I\subseteq I^0$. In \cite[Theorem 13]{Bivia2017} we proved that if $\lct(I)=\lct(I^0)$,
    then $\DP(I)=\lct(I)$ if and only if $I$ is diagonal. In \cite[Corollary 4.9]{BF2} we proved that if $\lct(I^0)\leq 1$ and there exists some $g\in I$ such that $g$ is Newton non-degenerate and $\Gamma_+(g)=\Gamma_+(I)$, then $\lct(I)=\lct(I^0)$. The condition $\lct(I^0)\leq 1$ can be checked with the aid of Howald's identity (\ref{mainHowald}).
The characterization of the equality $\DP(I)=\lct(I)$ is still an open problem.
We conjecture that, given an ideal $I$ of $\O_n$, the equality $\DP(I)=\lct(I)$ holds if and only if
$I$ becomes diagonal with respect to a suitable coordinate system $x_1,\dots, x_n$, that is, there
exists a biholomorphism $\varphi:(\C^n,0)\to (\C^n,0)$ and coordinates $x_1,\dots, x_n$ of $\C^n$ in the domain of $\varphi$ such that $\varphi^*(I)$ is diagonal with respect to $x_1,\dots, x_n$.

\item Let us suppose that $I$ and $J$ are projectively equivalent ideals, so let $a,b\in\Z_{\geq 1}$ such that
$\overline{I^a}=\overline{J^b}$. Then it follows that $\overline{I^bJ^b}=\overline{I^{a+b}}$ and therefore
\begin{align*}
\Li(IJ)&=\frac{b}{a+b}\Li(I)\\
\DP(IJ)&=\frac{b}{a+b}\DP(I)\\
\lct(IJ)&=\frac{b}{a+b}\lct(I).
\end{align*}
Consequently the condition $\Li(I)=\DP(I)$ holds if and only if $\Li(J)=\DP(J)$ for any, or for some, ideal $J$ projectively equivalent to $I$. An analogous assertion holds for the condition $\DP(I)=\lct(I)$.

\item The inequalities of (\ref{desKim}) extend naturally to any product of ideals. In particular, if $I_1,\dots, I_r$ are ideals of
$\O_n$ of finite colength, then
$$
\sum_{i=1}^r\frac{1}{\LL_0^{(i)}(I_1)+\cdots+\LL_0^{(i)}(I_r)}\leq \lct(I_1\cdots I_r).
$$

\end{enumerate}
\end{rem}

In the next result we analyze the case $J=\m_n$ of Theorem \ref{novafita}.

\begin{thm}\label{novafita2} Let $I$ be an ideals of $\O_n$ of finite colength. Then
\begin{equation}\label{desKim2}
\sum_{i=1}^n\frac{1}{\LL_0^{(i)}(I)+1} = \Li(\m_nI)\leq \DP(\m_nI)\leq \lct\(\m_nI\).
\end{equation}
Moreover, all inequalities of \textnormal{(\ref{desKim2})} become equalities if and only if
there exists some $r\in\Z_{\geq 1}$ such that $\overline I=\m_n^r$.
\end{thm}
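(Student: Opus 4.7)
First I would record that the chain of inequalities in \eqref{desKim2} is immediate. The leftmost equality comes from Theorem~\ref{ThmdesLi} applied with $J=\m_n$: equality holds in \eqref{desLi} in this case and $\LL_0^{(i)}(\m_n)=1$, so one has $\LL_0^{(i)}(\m_n I)=\LL_0^{(i)}(I)+1$ and therefore $\sum_{i=1}^n 1/(\LL_0^{(i)}(I)+1)=\Li(\m_n I)$. The middle inequality is Proposition~\ref{propdesLDP} and the right-hand one is the Demailly--Pham inequality \eqref{ineqDP}. For the ``if'' direction, the assumption $\overline I=\m_n^r$ gives $\overline{\m_n I}=\m_n^{r+1}$; Theorem~\ref{Lojorder} yields $\LL_0^{(i)}(I)=r$ for every $i$, and Howald's identity \eqref{mainHowald} gives $\lct(\m_n^{r+1})=n/(r+1)$, so every quantity in \eqref{desKim2} equals $n/(r+1)$.

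For the ``only if'' direction, assume all three members of \eqref{desKim2} coincide. By Proposition~\ref{propdesLDP} the middle equality forces $\m_n I$ to be a Hickel ideal; writing $s_k:=\LL_0^{(k)}(I)$, this amounts to
\begin{equation*}
e_i(\m_n I)=\prod_{k=1}^i(s_k+1) \qquad\text{for } i=1,\ldots,n.
\end{equation*}
My plan is to deduce from these identities that $s_1=s_2=\cdots=s_n$. Once this is established, using $s_1=\ord(I)\in\Z_{\geq 1}$ and $s_n=\LL_0(I)$ together with the analytic characterisation \eqref{ordre} of the integral closure gives both $\overline I\subseteq\m_n^{s_1}$ and $\m_n^{s_1}\subseteq\overline I$, hence $\overline I=\m_n^{s_1}$.

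To obtain $s_1=\cdots=s_n$ I would combine two ingredients. First, the multilinearity of mixed multiplicities applied slot by slot to the factorisation $\m_n I=\m_n\cdot I$ yields the Bhattacharya-type identity
\begin{equation*}
e_i(\m_n I)=\sum_{k=0}^{i}\binom{i}{k}e_k(I).
\end{equation*}
Second, the level-$i$ analogue of \eqref{ineq2}, namely $e_i(I)\leq s_i\, e_{i-1}(I)$, follows by applying Proposition~\ref{linkmultLs} to the image of $I$ in a generic hyperplane quotient $\O_n/\langle h_1,\ldots,h_{n-i}\rangle$ and invoking \cite[Lemma 3.9]{BF1} to identify $\LL_0^{(i)}(I)$ with the ordinary \L ojasiewicz exponent of that image. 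With these in hand, an induction on $i$ closes the argument: assuming $s_1=\cdots=s_{i-1}=s$, the Hickel identities at earlier levels give $e_k(I)=s^k$ for $k<i$, and the Hickel identity at level~$i$ combined with the Bhattacharya formula gives $e_i(I)=(s+1)^{i-1}(s_i-s)+s^i$. Plugging into $e_i(I)\leq s_i\, s^{i-1}$ and simplifying reduces to $(s_i-s)\bigl[(s+1)^{i-1}-s^{i-1}\bigr]\leq 0$; since the bracket is strictly positive (for $i\geq 2$) and $s_i\geq s_{i-1}=s$, this forces $s_i=s$.

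The main obstacle I anticipate is setting up the second ingredient cleanly. Proposition~\ref{linkmultLs} is stated at top dimension, so to extract the level-$i$ bound I must reduce to a generic quotient of dimension~$i$ and verify that $e_k(IR_H)=e_k(I)$ for $k\leq i$ when $H$ is chosen sufficiently generically; this is standard behaviour of mixed multiplicities under generic hyperplane section, but must be justified carefully for the linear forms $h_1,\ldots,h_{n-i}$. Once this reduction is in place, the algebraic manipulation in the inductive step is routine, and the extraction of $\overline I=\m_n^{s_1}$ from the equality of the $s_i$'s is immediate. Note that the third equality in \eqref{desKim2} does not enter the ``only if'' argument at all -- equality in the second inequality alone is strong enough.
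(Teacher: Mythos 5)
Your argument is correct, and its skeleton --- reduce the ``only if'' direction, via Proposition \ref{propdesLDP}, to the statement that $\m_nI$ is a Hickel ideal and then show that $s_1=\cdots=s_n$, where $s_i=\LL_0^{(i)}(I)$ --- is the paper's; but your extraction of $s_1=\cdots=s_n$ follows a genuinely different route. The paper works at the top level only: it expands $e(\m_nI)=\sum_{i}\binom{n}{i}e_i(I)$, bounds each $e_i(I)$ by $s_1\cdots s_i$ (the level-$i$ Hickel inequality), compares with the elementary-symmetric-function expansion of $\prod_k(s_k+1)$, and notes that the Hickel equality forces $s_1\cdots s_i=s_{k_1}\cdots s_{k_i}$ for every index set, whence all the $s_i$ coincide. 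You instead invoke the stronger consequence \ref{propeq2} of Proposition \ref{propdesLDP} to get $e_i(\m_nI)=\prod_{k\leq i}(s_k+1)$ at every level, pair it with the Bhattacharya expansion of $e_i(\m_nI)$ and the level-$i$ bound $e_i(I)\leq s_i\,e_{i-1}(I)$, and induct on $i$. Both routes rest on the same genericity input (invariance of $e_k$ and of $\LL_0^{(k)}$ under generic linear sections), which you rightly flag as the delicate point; the paper's single symmetric-function comparison is shorter, while your induction makes visible exactly which identity is used at each level. The endgame also differs harmlessly: you recover $\overline I=\m_n^{s_1}$ from $\LL_0(I)=\ord(I)$ by the curve criterion for integral closure, whereas the paper shows $e(I)=(\ord(I))^n$ and applies Rees' multiplicity theorem. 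Two further remarks: your observation that equality in the middle inequality of \eqref{desKim2} alone already forces $\overline I=\m_n^r$ agrees with what the paper actually proves; and in the ``if'' direction your value $\lct(\m_n^{r+1})=\frac{n}{r+1}$ is the correct one (the ``$\frac{1}{r+1}$'' appearing at that point of the paper's proof is a slip).
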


\begin{proof}
Relation (\ref{desKim2}) is the case $J=\m_n$ of (\ref{desKim}), joined with the fact that
$\LL^{(i)}_0(\m_nI)=\LL^{(i)}_0(I)+1$, for all $i=1,\dots, n$ (Theorem \ref{ThmdesLi}). Let us suppose that $\overline I=\m_n^r$, for some $r\in\Z_{\geq 1}$. In particular, $r=\ord(I)$. Hence $\overline{\m_nI}=\m_n^{r+1}$ and this implies that $\LL_0^{(i)}(\m_nI)=\LL_0^{(i)}(\m_n^{r+1})=r+1$ and
$\lct(\m_nI)=\lct(\m_n^{r+1})=\frac{1}{r+1}$. Therefore the inequalities of \textnormal{(\ref{desKim2})} become equalities.

Conversely, let us suppose that the inequalities of \textnormal{(\ref{desKim2})} are equalities. In particular
we have $\Li(\m_nI)=\DP(\m_nI)$. This implies that $\m_nI$ is a Hickel ideal, by Proposition \ref{propdesLDP}.

Let $a_i=\LL_0^{(i)}(I)$, for all $i=1,\dots, n$. Therefore $a_1\leq \cdots \leq a_n$ and we have $\LL_0^{(i)}(\m_nI)=a_i+1$, for all $i=1,\dots, n$.
Let us first observe that
\begin{align}
e(\m_nI)&=\sum_{i=0}^n \binom{n}{i}e_i(I)\leq 1+\sum_{i=1}^n \binom{n}{i}a_1\cdots a_i \label{comb0}   \\
\LL_0^{(1)}(\m_nI)\cdots \LL_0^{(n)}(\m_nI)&=(a_1+1)\cdots (a_n+1)=1+\sum_{i=1}^n\(\sum_{1\leq k_1<\cdots <k_i\leq n}a_{k_1}\cdots a_{k_i}\).
\label{comb1}
\end{align}
where the first equality of (\ref{comb0}) comes from the formula for the multiplicity of a product of ideals (see \cite[p.\,490]{Rees2}).

The property $a_1\leq \cdots \leq a_n$ implies that, given any index $i\in\{1,\dots, n\}$, we have
\begin{equation}\label{comb3}
a_1\cdots  a_i\leq a_{k_1}\cdots a_{k_i}
\end{equation}
for all set of indices $\{k_1,\dots ,k_i\}\subseteq\{1,\dots,n\}$
for which $1\leq k_1<\cdots <k_i\leq n$. This fact implies that
\begin{equation}\label{comb2}
e(\m_nI)\leq 1+\sum_{i=1}^n \binom{n}{i}a_1\cdots a_i\leq 1+\sum_{i=1}^n\(\sum_{1\leq k_1<\cdots <k_i\leq n}a_{k_1}\cdots a_{k_i}\).
\end{equation}
Since $\m_nI$ is Hickel, we have $e(\m_nI)=\LL_0^{(1)}(\m_nI)\cdots \LL_0^{(n)}(\m_nI)$, and hence relations (\ref{comb1}) and (\ref{comb2}) imply that $a_1\cdots  a_i=a_{k_1}\cdots a_{k_i}$, for all $1\leq k_1<\cdots <k_i\leq n$ and all $i\in\{1,\dots, n\}$.
In particular $a_1=\cdots=a_n=\ord(I)$. Hence
$$
e(I)\leq a_1\cdots a_n=(\ord(I))^n.
$$
Moreover, the inclusion $I\subseteq \m_n^{\ord(I)}$ implies that $e(I)\geq e(\m_n^{\ord(I)})=(\ord(I))^n$. Hence $e(I)=(\ord(I))^n$
and therefore $\overline I=m_n^{\ord(I)}$, by the Rees' multiplicity theorem.

\end{proof}

\begin{rem}
Let $I$ be an ideal of $\O_n$ of finite colength and let $g_1,\dots, g_m$ be any generating system of $I$. Let $r=\ord(I)$.
We remark that, by the characterization of the ideals with monomial integral closure \cite[Theorem 2.11]{BFS}, the condition $\overline I=\m_n^r$ holds if and only if the ideal of $\O_n$
generated by $j^rg_1,\dots, j^rg_m$ has finite colength, where $j^rg$ stands for the $r$-jet of a given function $g\in\O_n$
(that is, the sum of all terms in the Taylor expansion of $g$ of at most degree $r$).
\end{rem}

\begin{cor}\label{novafita3}
Let $f:(\C^n,0)\to (\C,0)$ be an analytic function germ with isolated singularity at the origin. Then
\begin{equation}\label{desKim3}
\sum_{i=1}^n\frac{1}{\LL_0^{(i)}(J(f))+1}= \Li\big(\m_nJ(f)\big)\leq \DP\big(\m_nJ(f)\big)\leq \lct\big(\m_n J(f)\big)
\end{equation}
Moreover, the following conditions are equivalent:
\begin{enumerate}[label=\textnormal{(\alph*)}]
\item\label{novafita3a} The inequalities of \textnormal{(\ref{desKim3})} become equalities.
\item\label{novafita3b} $\overline{J(f)}=\m_n^{\ord(f)-1}$.
\item\label{novafita3c} The polynomial $j^rf$ has an isolated singularity at the origin, where $r=\ord(f)$.
\end{enumerate}
\end{cor}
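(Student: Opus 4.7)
The plan is to apply Theorem \ref{novafita2} to the Jacobian ideal $I=J(f)$, and then translate the resulting numerical equality condition into the geometric conditions \ref{novafita3b} and \ref{novafita3c} by computing the order of $J(f)$ and inspecting its generators.

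First, since $f$ has an isolated singularity at the origin, $J(f)$ is $\m_n$-primary, i.e.\ has finite colength in $\O_n$. Therefore Theorem \ref{novafita2} applies directly with $I=J(f)$, and the chain of inequalities (\ref{desKim3}) is obtained verbatim from (\ref{desKim2}). Moreover, by that same theorem, equality throughout (\ref{desKim3}) is equivalent to the existence of some $r\in\Z_{\geq 1}$ with $\overline{J(f)}=\m_n^r$. This gives the implication \ref{novafita3a} iff (there exists such $r$), and it only remains to pin down $r$ and rewrite the condition in terms of jets.

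To identify $r$, I would take orders in the putative equality $\overline{J(f)}=\m_n^r$, so that $r=\ord(J(f))$. A short computation shows $\ord(J(f))=\ord(f)-1$: if $m=\ord(f)$ and $f_m$ denotes the degree-$m$ homogeneous part of $f$, then $f_m\neq 0$ and Euler's identity $\sum_{i}x_i\,\partial f_m/\partial x_i=m\,f_m$ forces some $\partial f/\partial x_i$ to contain a nonzero term of degree $m-1$; conversely every $\partial f/\partial x_i$ lies in $\m_n^{m-1}$. Thus the condition $\overline{J(f)}=\m_n^r$ is equivalent to $\overline{J(f)}=\m_n^{\ord(f)-1}$, which is precisely \ref{novafita3b}. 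This establishes \ref{novafita3a}$\Leftrightarrow$\ref{novafita3b}.

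For \ref{novafita3b}$\Leftrightarrow$\ref{novafita3c}, I would apply the remark following Theorem \ref{novafita2} to the generating system $g_i=\partial f/\partial x_i$ of $J(f)$ with the exponent $r=\ord(J(f))=\ord(f)-1$. Writing $m=\ord(f)$, one has the key identity
$$
j^{m-1}\!\left(\frac{\partial f}{\partial x_i}\right)=\frac{\partial\,(j^m f)}{\partial x_i},
$$
since truncating the Taylor series of $\partial f/\partial x_i$ at degree $m-1$ is the same as differentiating the degree-$\leq m$ truncation of $f$. Consequently the ideal generated by the $j^{m-1}g_i$ equals the Jacobian ideal $J(j^mf)$ of the polynomial $j^mf=j^{\ord(f)}f$. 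By the quoted remark, $\overline{J(f)}=\m_n^{m-1}$ if and only if $J(j^mf)$ has finite colength, i.e.\ if and only if $j^{\ord(f)}f$ has an isolated singularity at the origin. This is exactly \ref{novafita3c}, completing the equivalences.

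The main point requiring verification is the identification $\ord(J(f))=\ord(f)-1$ and the Taylor-truncation identity above; both are elementary, so no substantial obstacle is anticipated beyond assembling the pieces supplied by Theorem \ref{novafita2} and its subsequent remark.
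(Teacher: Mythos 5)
Your proposal is correct and follows essentially the same route as the paper: apply Theorem \ref{novafita2} to $I=J(f)$, identify $r=\ord(J(f))=\ord(f)-1$, and reduce \ref{novafita3b}$\Leftrightarrow$\ref{novafita3c} to the identity between the truncated partial derivatives of $f$ and the partial derivatives of $j^{\ord(f)}f$ combined with the monomial-integral-closure criterion of \cite[Theorem 2.11]{BFS}. The only cosmetic difference is that you invoke the remark preceding the corollary (jets of a generating system), whereas the paper phrases the same computation in terms of principal parts along the compact face $\Delta$ of $\Gamma_+(\m_n^{r-1})$; these coincide here since the $(\ord(f)-1)$-jet of $\partial f/\partial x_i$ is exactly its lowest-order homogeneous part.
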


\begin{proof}
Relation (\ref{desKim3}) is a direct application Theorem \ref{novafita} by considering the case $I=\m_n$ and $J=J(f)$.
Since $\ord(J(f))=\ord(f)-1$, the equivalence of \ref{novafita3a} and \ref{novafita3b} follows from Theorem \ref{novafita2}.

Let us see the equivalence between \ref{novafita3b} and \ref{novafita3c}. Let $g=j^rf$, where $r=\ord(f)$, and let $\Delta$
denote the unique compact face of $\Gamma_+(\m_n^{r-1})$ of dimension $n-1$. Let $f=\sum_ka_kx^k$ be the Taylor expansion of $f$ around the
origin. For any given
$k\in \R^n$, we denote by $\vert k\vert$ denote the sum of the components of $k$. Given an index $i\in\{1,\dots, n\}$, if $e_1,\dots, e_n$ denotes the canonical basis of $\R^n$, then we have
\begin{align*}
\supp\(\frac{\partial g}{\partial x_i}\)&=\big\{k-e_i: \vert k\vert=r,\, k_i>0,\, a_k\neq 0\big\}\\
\supp\(\frac{\partial f}{\partial x_i}\) \cap \Delta&=\big\{k-e_i: \vert k-e_i\vert=r-1,\, k_i>0,\, a_k\neq 0\big\}.
\end{align*}
Therefore we see that
\begin{equation}\label{partprincipal}
\frac{\partial g}{\partial x_i}=\(\frac{\partial f}{\partial x_i}\)_{\!\!{\Delta}}
\end{equation}
for all $i=1,\dots, n$.

By \cite[Theorem 2.11]{BFS}, where the ideals with monomial integral closure are characterized, condition \ref{novafita3b} is equivalent to saying that the ideal of $\O_n$ generated by
$$
{\(\frac{\partial f}{\partial x_1}\)}_{\!\!\Delta},\dots,{\(\frac{\partial f}{\partial x_n}\)}_{\!\!{\Delta}}
$$
has finite colength in $\O_n$. Therefore, relation (\ref{partprincipal}) shows the equivalence between
items \ref{novafita3b} and \ref{novafita3c}.
\end{proof}

We also refer to \cite[Proposition 2.7]{Cargese} for further characterizations of the condition $\overline{J(f)}=\m_n^{\ord(f)-1}$.
In the next example we illustrate the sharpness of the chain of inequalities given in (\ref{desKim3}).

\begin{ex}\label{BS}
Let us consider the analytic family of function germs $f_t:(\C^3,0)\to (\C,0)$ given by the Brian\c con-Speder example (see \cite{BSexample}). That is
$$
f_t(x,y,z)=x^5+z^{15}+y^7z+t xy^6
$$
for all $(x,y,z)\in\C^3$, $t\in\C$. We recall that $f_t$ is weighted homogeneous with respect to the vector of weights
$w=(3,2,1)$, for all $\vert t\vert\ll 1$. As explained in \cite[p.\,805]{BF2}, we have that
\begin{equation}\label{exemplesstars}
\mu^*(f_t)=
\begin{cases}
(364, 28, 4)  &\textrm{if $t=0$}\\
(364, 26, 4)  &\textrm{if $t\neq 0$}
\end{cases}
\hspace{1cm}
\LL^*_0(J(f_t))=
\begin{cases}
(14,7,4)  &\textrm{if $t=0$}\\
(14,6.5,4)  &\textrm{if $t\neq 0$}.
\end{cases}
\end{equation}
Therefore we see that $J(f_t)$ is a Hickel ideal if and only if $t\neq 0$.

Using (\ref{exemplesstars}) it follows that
$$
\Li(\m_3J(f_0))=\frac{1}{\LL_0^{(1)}(J(f_0))+1}+\frac{1}{\LL_0^{(2)}(J(f_0))+1}+\frac{1}{\LL_0^{(3)}(J(f_0))+1}=\frac{47}{120}\simeq 0.391
$$
Moreover
\begin{align*}
\DP(\m_nJ(f_0))&=\frac{1}{e_1(\m_nJ(f_0))}+\frac{e_1(\m_nJ(f_0))}{e_2(\m_nJ(f_0))}+\frac{e_2(\m_nJ(f_0))}{e_3(\m_nJ(f_0))}\\
&=
\frac{1}{5}+\frac{5}{37}+\frac{37}{461}=\frac{35437}{85285}\simeq 0.415
\end{align*}

Since the ideal $J(f_0)$ is Newton non-degenerate, then $\m_3J(f_0)$ is also. Let us also remark that
$$
\overline{\m_3J(f_0)}=\overline{\m_3\big\langle x^4, y^7, z^{14}, y^6z\big\rangle}.
$$

Therefore, by applying (\ref{mainHowald}), we have
$$
\lct(\m_3J(f_0))=\lct\(z^{15}, yz^{14}, xz^{14}, y^6z^2, y^7z, xy^6z, y^8, xy^7, x^4z, x^4y, x^5\)=\frac{4}{9}\simeq 0.444
$$

Let us fix now a small enough parameter $t\neq 0$. Also by (\ref{exemplesstars}), we obtain
$$
\Li(\m_3J(f_t))=\frac{1}{\LL_0^{(1)}(J(f_t))+1}+\frac{1}{\LL_0^{(2)}(J(f_t))+1}+\frac{1}{\LL_0^{(3)}(J(f_t))+1}=\frac{2}{5}=0.4
$$
We also deduce that
\begin{align*}
\DP(\m_nJ(f_t))&=\frac{1}{e_1(\m_nJ(f_t))}+\frac{e_1(\m_nJ(f_t))}{e_2(\m_nJ(f_t))}+\frac{e_2(\m_nJ(f_t))}{e_3(\m_nJ(f_t))}\\
&=\frac{1}{5}+\frac{5}{35}+\frac{35}{455}=\frac{191}{455}\simeq 0.419
\end{align*}

Let $J$ denote the ideal of $\O_3$ generated by the monomials $x^{k_1}y^{k_2}z^{k_3}$ whose support $(k_1,k_2,k_3)$ belongs to $\Gamma_+(\m_3J(f_t))$. Therefore, we have
$$
J=\overline{\m_3\big\langle x^4, y^6, z^{14}\big\rangle}.
$$
By applying (\ref{mainHowald}), we obtain that
$\lct(J)=\frac{41}{90}$.
Let us consider the function $g$ given by
$$
g=x\frac{\partial f_t}{\partial x}+y\frac{\partial f_t}{\partial y}+z\frac{\partial f_t}{\partial z}.
$$
Obviously $g\in \m_3J(f_t)$. By using Singular \cite{Singular}, is straightforward to see that $\Gamma_+(g)=\Gamma_+(J)$ and $g$ is Newton non-degenerate. Hence by \cite[Corollary 4.9]{BF2}, we conclude the equality
$$
\lct(\m_3J(f_t))=\lct(J)=\frac{41}{90}\simeq 0.455.
$$

The computation of $\lct(\m_3J(f_0))$ and $\lct(J)$ via Howald's identity (\ref{mainHowald})
has been carried on with the help of the program {\it G\'ermenes} developed by A.\,Montesinos-Amilibia \cite{Montesinos}.
All the mixed multiplicities involved in the above computations have been obtained with the aid of Singular \cite{Singular}.
\end{ex}



\begin{thebibliography}{AA}


\bibitem{Bivia2009} C.\,Bivià-Ausina,
\newblock{\it Local \L ojasiewicz exponents, Milnor numbers and mixed multiplicities of ideals},
\newblock Math. Z. {\bf 262}, no. 2 (2009), 389--409.

\bibitem{Bivia2015} C.\,Bivià-Ausina,
\newblock{\it Multiplicity and \L ojasiewicz exponent of generic linear sections of monomial ideals},
\newblock  Bull. Aust. Math. Soc. {\bf 91}, no. 2 (2015), 191–201.

\bibitem{Bivia2017} C.\,Bivià-Ausina,
\newblock{\it Log canonical threshold and diagonal ideals},
\newblock Proc. Amer. Math. Soc. {\bf 145}, no. 5 (2017), 1905–1916.

\bibitem{Bivia2020} C.\,Bivià-Ausina,
\newblock{\it The sequence of mixed \L ojasiewicz exponents associated to pairs of monomial ideals},
\newblock J. Algebra {\bf 550} (2020), 108–141.

\bibitem{BiviaMRL} C.\,Bivi\`a-Ausina,
\newblock{Joint reductions of monomial ideals and multiplicity of complex analytic maps},
\newblock Math. Res. Lett. {\bf 15} (2008), no. 2, 389--407.

\bibitem{BE2009} C.\,Bivi\`a-Ausina and S.\,Encinas,
\newblock{\it \L ojasiewicz exponents and resolution of singularities},
\newblock Arch. Math. {\bf 93}, no. 3 (2009), 225–234.

\bibitem{BE2011} C.\,Bivi\`a-Ausina and S.\,Encinas,
\newblock{\it The \L ojasiewicz exponent of a set of weighted homogeneous ideals},
\newblock J. Pure Appl. Algebra {\bf 215}, no. 4 (2011),  578--588.

\bibitem{BE2013} C.\,Bivi\`a-Ausina and S.\,Encinas,
\newblock{\it \L ojasiewicz exponent of families of ideals, Rees mixed multiplicities and Newton filtration},
\newblock Rev. Mat. Complut. {\bf 26}, no. 2 (2013), 773–798.

\bibitem{BF1} C.\,Bivi\`a-Ausina and T.\,Fukui,
\newblock{\it Mixed \L ojasiewicz exponents and log canonical thresholds of ideals},
\newblock J. Pure Appl. Algebra {\bf 220}, no. 1 (2016),  223–245.

\bibitem{BF2} C.\,Bivi\`a-Ausina and T.\,Fukui,
\newblock{\it Invariants for bi-Lipschitz equivalence of ideals},
\newblock Q. J. Math. {\bf 68}, no. 3 (2017), 791–-815.

\bibitem{BFS} C.\,Bivi\`a-Ausina, T.\,Fukui and M.\,J.\,Saia,
\newblock{\it Newton filtrations, graded algebras and codimension of non-degenerate ideals},
\newblock Math. Proc. Cambridge Philos. Soc. {\bf 133}, no. 1 (2002), 55–75.

\bibitem{BLQ} H.\,Blum, Y.\,Liu and L.\,Qi,
\newblock{\it Convexity of multiplicities of filtrations on local rings},
\newblock Compos. Math. {\bf 160}, no. 4 (2024), 878–914.

\bibitem{BSexample}
J.\,Brian\c con \and J.\,P.\,Speder,
{\it La trivialit\'e topologique n'implique pas les conditions de Whitney},
C. R. Acad. Sc. Paris {\bf 280} (1975), 365--367.

\bibitem{Brz} S.\,Brzostowski,
J.\,Brian\c con \and J.\,P.\,Speder,
{\it The \L ojasiewicz exponent of semiquasihomogeneous singularities},
Bull. Lond. Math. Soc. {\bf 47} (2015), no. 5, 848–852.

\bibitem{CHRR} C.\,Ciuperca, W.\,J.\,Heinzer, L.\,J.\,Ratliff Jr., D.\,E.\,Rush,
\newblock{\it Projectively equivalent ideals and Rees valuations},
J. Algebra {\bf 282} (2004), no. 1, 140-156.

\bibitem{DAngelo} J.\,P.\,D'Angelo,
\newblock{Several complex variables and the geometry of real hypersurfaces},
\newblock Studies in Advanced Mathematics. CRC Press, Boca Raton, FL, 1993.

\bibitem{DAngelo2} J. P. D'Angelo,
\newblock{\it Subelliptic estimates and failure of semicontinuity for orders of contact},
\newblock Duke Math. J. {\bf 47} (1980), 955--957.

\bibitem{Singular} W.\,Decker, G.-M.\,Greuel, G.\,Pfister and H.\,Sch\"onemann,
\newblock {\sc Singular} {4-0-2} --- A computer algebra system for polynomial computations.
\newblock {http://www.singular.uni-kl.de} (2015).

\bibitem{DK} J.-P.\,Demailly \and J.\,Kollár,
\newblock{\it Semi-continuity of complex singularity exponents and Kähler-Einstein metrics on Fano orbifolds},
\newblock  Ann. Sci. École Norm. Sup. (4) {\bf 34} (2001), no. 4, 525–556.

\bibitem{DP} J.-P.\,Demailly \and Pham\;Hoàng\;Hiêp,
\newblock{\it A sharp lower bound for the log canonical threshold},
\newblock  Acta Math. {\bf 212}, no. 1 (2014), 1–9.

\bibitem{DM} B.\,Dirks and M.\,Musta\c{t}\u a,
\newblock{\it Minimal exponents of hyperplane sections: a conjecture of Teissier},
\newblock J. Eur. Math. Soc. (JEMS) {\bf 25} (2023), no. 12, 4813–4840.

\bibitem{EM} E.\,Elduque and M.\,Musta\c{t}\u a,
\newblock{\it On a conjecture of Teissier: the case of log canonical thresholds},
\newblock Sb. Math. {\bf 212} (2021), no. 3, 433–448

\bibitem{Feehan1} P.\,M.\,N.\,Feehan,
\newblock{\it Resolution of singularities and geometric proofs of the \L ojasiewicz inequalities},
\newblock Geom. Topol. {\bf 23} (2019), no. 7, 3273–3313.

\bibitem{dFEM1} T.\,de Fernex, L.\,Ein and M.\,\,Musta\c{t}\u a,
\newblock{\it Multiplicities and log canonical threshold},
\newblock J. Algebraic Geom. {\bf 13} (2004), no. 3, 603--615.

\bibitem{G96} T.\,Gaffney,
\newblock{\it Multiplicities and equisingularity of {\icis} germs},
\newblock Invent. Math. {\bf 123} (1996), no. 2, 209–220.

\bibitem{G92} T.\,Gaffney,
\newblock{\it Integral closure of modules and Whitney equisingularity},
\newblock Invent. Math. {\bf 107} (1992), no. 2, 301–322.

\bibitem{GKP} E.\,García Barroso, T.\,Krasi\'nski and A. P\l oski,
\newblock{\it The \L ojasiewicz numbers and plane curve singularities},
\newblock Ann. Polon. Math. {\bf 87} (2005),  127–150.


\bibitem{GP} G.-M.\,Greuel \and G.\,Pfister,
\newblock{A Singular introduction to commutative algebra. Second edition.}
\newblock{Springer, Berlin, 2008}.

\bibitem{Hickel2} M.\,Hickel,
\newblock{\it Fonction asymptotique de Samuel des sections hyperplanes et multiplicité},
\newblock J. Pure Appl. Algebra {\bf 214} (2010), no. 5, 634–645.

\bibitem{Howald} J.\,A.\,Howald,
\newblock{\it Multiplier ideals of monomial ideals},
\newblock Trans. Amer. Math. Soc. {\bf 353} (2001), no. 7, 2665–2671.

\bibitem{HS} C.\,Huneke and I.\,Swanson,
\newblock Integral closure of ideals, rings, and modules.
\newblock London Math. Soc. Lecture Note Series 336 (2006), Cambridge
University Press.

\bibitem{Katz} D.\,Katz,
\newblock{\it Note on multiplicity},
Proc. Amer. Math. Soc. {\bf 104} (1988), no. 4, 1021–1026.

\bibitem{Kollar} J.\,Kollár,
\newblock{\it Singularities of pairs}.
\newblock Algebraic geometry—Santa Cruz 1995, 221–287, Proc. Sympos. Pure Math., 62, Part 1, Amer. Math. Soc., Providence, RI, 1997.

\bibitem{Kim} D.\,Kim,
\newblock{\it On a question of Teissier},
\newblock Ann. Mat. Pura Appl. (4) {\bf 200} (2021), no. 5, 2305-2311.

\bibitem{KOP} T.\,Krasi\'nski, G.\,Oleksik and A.\,P\l oski,
\newblock{\it The \L ojasiewicz exponent of an isolated weighted homogeneous surface singularity},
\newblock Proc. Amer. Math. Soc. {\bf 137} (2009), no. 10, 3387–3397.

\bibitem{LT1974} M.\,Lejeune and B.\,Teissier,
\newblock{\it Cl\^oture int\'egrale des id\'eaux et \'equisingularit\'e}. With an appendix by J.J. Risler.
\newblock Ann. Fac. Sci. Toulouse Math. (6) {\bf 17}, no. 4 (2008), 781-859.

\bibitem{Loeser} F.\,Loeser,
\newblock{\it Exposant d'Arnold et sections planes},
\newblock C. R. Acad. Sci. Paris Sér. I Math. {\bf 298}, no. 19 (1984), 485–488.

\bibitem{Matsumura} H.\,Matsumura,
\newblock Commutative ring theory,
\newblock Cambridge studies in advanced mathematics 8, Cambridge University Press (1986).

\bibitem{MN2005} J.\,D.\,McNeal and A.\,Némethi,
\newblock{\it The order of contact of a holomorphic ideal in $\C^2$},
\newblock Math. Z. {\bf 250}, no. 4 (2005), 873-883.

\bibitem{Montesinos}
A.\,Montesinos-Amilibia, G\'ermenes, Program available at www.uv.es/montesin

\bibitem{Mu2} M.\,Musta\c{t}\u a,
\newblock{\it \textsc{impanga} lecture notes on log canonical thresholds}.
\newblock Notes by Tomasz Szemberg. EMS Ser. Congr. Rep., Contributions to algebraic geometry, 407–442, Eur. Math. Soc., Zürich, 2012.

\bibitem{Ploski1984} A.\,P\l oski,
\newblock{\it Sur l'exposant d'une application analytique \textnormal{I}},
\newblock Bull. Polish Acad. Sci. Math. {\bf 32} (1984), 669-673.

\bibitem{Ploski1985} A.\,P\l oski,
\newblock{\it Sur l'exposant d'une application analytique \textnormal{II}},
\newblock Bull. Polish Acad. Sci. Math. {\bf 33}, no. 3-4 (1985), 123-127.

\bibitem{PloskiBCP} A. P\l oski,
\newblock{\it Multiplicity and the \L ojasiewicz exponent},
\newblock Singularities (Warsaw, 1985), 353--364,
Banach Center Publ., 20, PWN, Warsaw, 1988.

\bibitem{Ploski2010} A.\,P\l oski,
\newblock{\it Semicontinuity of the \L ojasiewicz exponent},
\newblock Univ. Iagel. Acta Math. {\bf 48} (2010), 103-110.

\bibitem{Rees2} D.\,Rees,
\newblock{\it Generalizations of reductions and mixed multiplicities},
\newblock London Math. Soc. (2) {\bf  29}  (1984), 397-414.

\bibitem{Reesllibre} D.\,Rees,
\newblock Lectures on the asymptotic theory of ideals.
\newblock London Mathematical Society Lecture Note Series, 113. Cambridge University Press, Cambridge, 1988.

\bibitem{RS} D. Rees and J. Sally,
\newblock{\it General elements and joint reductions},
\newblock Mich. Math. J. {\bf  35}, no. 2 (1988), 241--254.

\bibitem{RSharp} D.\,Rees and R.\,Y.\,Sharp,
\newblock{\it On a theorem of B. Teissier on multiplicities of ideals in local rings},
\newblock J. London Math. Soc. (2) {\bf  18}, no. 3 (1978), 449–463.

\bibitem{RodakKodai} T.\,Rodak,
\newblock{\it Reduction of a family of ideals},
\newblock Kodai Math. J. {\bf 38}, no. 1 (2015), 201–208.

\bibitem{RRS2016} T.\,Rodak,\,A.\,Ró\.zycki\,and\,S.\,Spodzieja,
\newblock{\it Multiplicity and semicontinuity of the \L ojasiewicz exponent},
\newblock Bull. Pol. Acad. Sci. Math. {\bf 64}, no. 1 (2016), 55–-62.

\bibitem{RS2011} T.\,Rodak and S.\,Spodzieja,
\newblock{\it Effective formulas for the local \L ojasiewicz exponentI},
\newblock Math. Z. {\bf 268}, no. 1--2 (2011), 37-44.

\bibitem{Sp2000} S.\,Spodzieja,
\newblock{\it Multiplicity and the \L ojasiewicz exponent},
\newblock Ann. Polon. Math. {\bf 73}, no. 3 (2000), 257-267.

\bibitem{Cargese} B.\,Teissier,
\newblock{\it Cycles évanescents, sections planes et conditions de Whitney},
\newblock Singularités à Cargèse (Rencontre Singularités Géom. Anal., Inst. Études Sci., Cargèse, 1972), pp. 285–362,
Astérisque, Nos. 7 et 8, Soc. Math. France, Paris, 1973.

\bibitem{TeissierIM} B.\,Teissier,
\newblock{\it Variétés Polaires I. Invariants polaires des singularités d'hypersurfaces},
\newblock Invent. Math. {\bf 40} (1977), 267-292.

\bibitem{TeissierResonances} B.\,Teissier,
\newblock{\it Some resonances of \L ojasiewicz inequalities},
\newblock Wiad. Mat. {\bf 48} (2012), no. 2, 271-284.

\bibitem{TeissierRIMS} B.\,Teissier,
\newblock{\it Polyèdre de Newton jacobien et équisingularité}.
\newblock Seminar on Singularities (Paris, 1976/1977), Publ. Math. Univ. Paris VII 7, Univ. Paris VII, Paris, 193–221 (1980).
English version: {\it Jacobian Newton polyhedra and equisingularity}, arXiv:1203.5595 [math.AG].

\bibitem{V} W.\,Vasconcelos,
\newblock Integral closure. Rees algebras, multiplicities, algorithms.
\newblock Springer Monographs in Mathematics. Springer-Verlag, Berlin, 2005.

\end{thebibliography}
\end{document}